\newcommand*{\DashedArrow}[1][]{\mathbin{\tikz [baseline=-0.25ex,-latex, dashed,#1] \draw [#1] (0pt,
0.5ex) -- (1.3em,0.5ex);}}
\newtheorem{theorem}{Theorem}[section]
\newtheorem{lemma}[theorem]{Lemma}
\theoremstyle{definition}
\newtheorem{definition}[theorem]{Definition}
\theoremstyle{proposition}
\newtheorem{proposition}[theorem]{Proposition}
\newtheorem{remark}[theorem]{Remark}
\newtheorem{cor}[theorem]{Corollary}
\numberwithin{equation}{section}
\theoremstyle{main}
\newtheorem{main}{Theorem}
\begin{document}\title[Branched pull-back components]{Stability of branched pull-back projective foliations}

\author{Costa e Silva, W.}
\curraddr{IRMAR Universit\'{e} de Rennes 1, Campus de Beaulieu
35042 Rennes Cedex
France}
\email{wancossil@gmail.com}
\thanks{The first author is supported by Capes-Brazil.}

%
\subjclass[2000]{32S65}



\keywords{holomorphic foliations, Kupka phenomena's, weighted blow-up, weighted projective spaces}

\begin{abstract}  We prove that, if $n\geq 3$, a singular foliation $\mathcal{F}$ on $\mathbb P^n$ which can be written as pull-back, where $\mathcal{G}$ is a  foliation in $ {\mathbb P^2}$ of degree $d\geq2$ with one or three invariant lines in general position and $f:{\mathbb P^n}\DashedArrow[->,densely dashed]{\mathbb P^2}$, $deg\left(f\right)=\nu\geq2,$ is an appropriated rational map, is stable under holomorphic deformations. As a consequence we conclude that the closure of the sets $\{\mathcal {F}= f^{*}\left(\mathcal{G}\right)\}$ are new irreducible components of the space of holomorphic foliations of certain degrees.\end{abstract}

\maketitle
\tableofcontents

\section{Introduction}
Let $\mathcal F$ be a holomorphic singular foliation on $\mathbb P^{n}$ of codimension $1$, \break$\Pi_{n}:\mathbb C^{n+1}\backslash \left\{0\right\} \to \mathbb P^{n}$  be the natural projection and $\mathcal F^*=\Pi_{n}^*\left(\mathcal F\right).$ It is known that $\mathcal F^*$ can be defined by an integrable $1-$form $\Omega=\sum_{j=0}^{n}A_{j}dz_{j}$ where the $A_{j}'s$ are homogeneous polynomials of the same degree $k+1$ satisfying the Euler condition:
\begin{equation}\label{rel-1}
\sum_{j=0}^{n}z_{j}A_{j}\equiv0.
\end{equation}
The singular set $S(\mathcal F)$ is given by $S(\mathcal F)=\left\{A_{0}=...=A_{n}=0\right\}$
and is such that codim$\left(S\left(\mathcal F\right)\right)\geq2$. The integrability condition is given by 
\begin{equation}\label{rel-2}
\Omega \wedge d\Omega=0.
\end{equation}

The form $\Omega$ will be called a homogeneous expression of $\mathcal F.$
The degree of $\mathcal F$ is, by definition, the number of tangencies (counted with multiplicities) of a generic linearly embedded $\mathbb P^1$ with $\mathcal F.$ If we denote it by $deg(\mathcal F)$ then $deg\left(\mathcal F\right)=k.$ The set of homogeneous $1$-forms which satisfy (\ref{rel-1}) and (\ref{rel-2}) will be denoted by $\tilde\Omega^1(n,k+1)$. We denote the space of foliations of a fixed degree $k$ in  $\mathbb P^n$ by $\mathbb{F}{\rm{ol}}\left(k,n\right)$. Due to the integrability condition and the fact that $S\left(\mathcal F\right)$ has codimension $\geq2$, we see that $\mathbb{F}{\rm{ol}}\left(k,n\right)$ can be identified with a Zariski's open set in the variety obtained by projectivizing the space of forms $\Omega$ which satisfy (\ref{rel-1}) and (\ref{rel-2}), i.e $\mathbb P\tilde\Omega^1(n,k+1)$. It is in fact an intersection of quadrics. To obtain a satisfactory description of $\mathbb{F}{\rm{ol}}\left(k;n\right)$ (for example, to talk about deformations) it would be reasonable to know the decomposition of $\mathbb{F}{\rm{ol}}\left(k;n\right)$ in irreducible components. This leads us to the following:
\vskip0.2cm
\noindent \textbf{{\underline{Problem}: \emph{Describe and classify the irreducible components of\break $\mathbb{F}{\rm{ol}}\left(k;n\right)$ $k\geq 3$ on ${\mathbb P^n}$, $n\geq 3.$}}}
\vskip0.2cm
One can exhibit some kind of list of components in every degree, but this list is incomplete.  In the  paper \cite{cln}, the authors proved that the space of holomorphic codimension one foliations of degree $2$ on  ${\mathbb P^n}, n\geq 3$, has six irreducible components, which can be described by geometric and dynamic properties of a generic element. We refer the curious reader to \cite{cln} and \cite{ln}  for a detailed description of them.
 There are known families of irreducible components in which the typical element is a pull-back of a foliation on ${\mathbb P^2}$ by a rational map. Given a generic rational map $f: {\mathbb P^n}  \DashedArrow[->,densely dashed    ]   {\mathbb P^2}$ of degree $\nu\geq1$, it can be written in homogeneous coordinates as $f=(F_0,F_1,F_2)$ where $F_0,F_1$ and $F_2$ are homogeneous polynomials of degree $\nu$. Now consider a foliation $\mathcal G$ on ${\mathbb P^2}$ of degree $d\geq2.$ We can associate to the pair $(f,\mathcal G)$ the pull-back foliation $\mathcal F=f^{\ast}\mathcal G$.
The degree of the foliation $\mathcal F$ is $\nu(d+2)-2$ as proved in \cite{clne}.
Denote by $PB(d,\nu;n)$ the closure in $\mathbb{F}{\rm{ol}}\left(\nu(d+2)-2,n\right)$, $n \geq 3$ of the set of foliations $\mathcal F$ of the form $f^{\ast}\mathcal G$. Since $(f,\mathcal G)\to f^{\ast}\mathcal G$ is an algebraic parametrization of $PB(d,\nu;n)$ it follows that $PB(d,\nu;n)$ is an unirational irreducible algebraic subset of $\mathbb{F}{\rm{ol}}\left(\nu(d+2)-2,n\right)$, $n \geq 3$. We have the following result:
\begin{theorem}
  $PB(d,\nu;n)$ is a unirational irreducible component of\break $\mathbb{F}{\rm{ol}}\left(\nu(d+2)-2,n\right);$ $n \geq 3$, $\nu\geq1$  and $d \geq 2$.
\end{theorem}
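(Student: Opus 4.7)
The plan splits into an essentially formal half and a geometrically substantive half. For the formal half — unirationality and irreducibility — one considers the parametrization
\[
\Phi : \mathrm{Rat}(\nu;n,2)\times\mathbb{F}\mathrm{ol}(d,2) \longrightarrow \mathbb{F}\mathrm{ol}(\nu(d+2)-2,n),\qquad (f,\mathcal{G}) \longmapsto f^*\mathcal{G},
\]
whose source is an irreducible product of two rational varieties (the first factor is a Zariski-open subset of a product of projective spaces of homogeneous polynomials of degree $\nu$; the second is the Zariski-open subset of $\mathbb{P}\tilde\Omega^1(2,d+1)$ described in the introduction, itself rational). Since $\Phi$ is a morphism of algebraic varieties, $PB(d,\nu;n) = \overline{\mathrm{Im}\,\Phi}$ is automatically unirational and irreducible.

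The substantive half is the maximality claim, i.e.\ that $PB(d,\nu;n)$ is not properly contained in any larger irreducible subvariety of $\mathbb{F}\mathrm{ol}(\nu(d+2)-2,n)$. I would fix a generic pair $(f_0,\mathcal{G}_0)$, set $\mathcal{F}_0 = f_0^*\mathcal{G}_0$, and consider an arbitrary holomorphic family $\{\mathcal{F}_t\}_{t\in(\mathbb{C},0)}$ in $\mathbb{F}\mathrm{ol}(\nu(d+2)-2,n)$ with $\mathcal{F}_0$ at $t=0$. The goal is to produce, holomorphically in $t$, a pair $(f_t,\mathcal{G}_t)$ with $\mathcal{F}_t = f_t^*\mathcal{G}_t$, which forces the germ of $\{\mathcal{F}_t\}$ to lie in $PB(d,\nu;n)$. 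The reconstruction exploits that generic singularities of $\mathcal{G}_0$ are of Kupka type, and pull back to codimension-two Kupka components of $S(\mathcal{F}_0)$ of the form $\bigcap_{i}\{F_0^{(i)} - \lambda_i F_0^{(0)} = 0\}$. By the Kupka stability theorem these persist through the family as Kupka sets $K_t$; they remain complete intersections of the same numerical type, and polynomial interpolation recovers a holomorphically varying triple $(F_0^t,F_1^t,F_2^t)$ of degree $\nu$ polynomials, hence a rational map $f_t$. The foliation $\mathcal{G}_t$ is then uniquely determined by $\mathcal{F}_t = f_t^*\mathcal{G}_t$, of degree $d$ by the degree formula of \cite{clne}.

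The main obstacle — and the very obstruction this paper is devoted to overcoming in its new cases — is that this Kupka-based reconstruction breaks down whenever $\mathcal{G}_0$ fails to carry enough Kupka singularities in general position: dicritical, radial, or on-line singularities contribute non-Kupka codimension-two components to $S(\mathcal{F}_0)$ which do not fall under the usual stability theorem. In the case of present interest, where $\mathcal{G}_0$ has one or three invariant lines, the invariant hypersurfaces $f_0^{-1}(\ell_i)\subset\mathbb{P}^n$ are themselves persistent under deformation, being $\mathcal{F}_0$-invariant divisors of a rigid degree, and a weighted blow-up along the non-Kupka locus — as flagged by the paper's keywords — allows one to analyse the transverse structure of $\mathcal{F}_t$ along the deformed invariant hypersurfaces and recover the map $f_t$ from that data. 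Once the pair $(f_t,\mathcal{G}_t)$ is in hand, a dimension count of $\dim PB(d,\nu;n)$ (computed from $\dim \Phi$ modulo the $\mathrm{PGL}(3,\mathbb{C})$-action on $\mathbb{P}^2$) compared against the dimension of the ambient component at $\mathcal{F}_0$ seals the maximality claim.
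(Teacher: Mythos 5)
First, a point of orientation: the paper does not actually prove this statement. It is quoted as known, with the case $\nu=1$ attributed to \cite{caln} and the case $\nu>1$ to \cite{clne}; the same reconstruction strategy is then adapted in Sections 4--6 to prove the new Theorem A. So the fair comparison is with the argument of \cite{clne} as reflected in the paper's own proof of Theorem A. Your formal half (unirationality and irreducibility of $PB(d,\nu;n)$ via the algebraic parametrization $(f,\mathcal G)\mapsto f^*\mathcal G$) is exactly what the introduction records, and your substantive half has the correct skeleton: fix a generic pair $(f_0,\mathcal G_0)$, take an arbitrary germ of deformation $(\mathcal F_t)$, and reconstruct $(f_t,\mathcal G_t)$ with $\mathcal F_t=f_t^*\mathcal G_t$.

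However, as written the reconstruction has genuine gaps. (i) You never treat the indeterminacy locus $I(f_0)$, which is precisely where $\mathcal F_0$ fails to be Kupka; there the singularity is quasi-homogeneous (radial type in the unbranched case), and its stability under deformation (Proposition \ref{prop5.5}, from \cite{ccgl}) is what forces the deformed fibers to keep meeting along a codimension-three set $I(t)$ and what produces, via restriction of the blown-up foliation to the exceptional divisor, the candidate $\mathcal G_t$. Without this, ``polynomial interpolation recovers a holomorphically varying triple'' is unjustified: Kupka stability plus \cite{Ser0} gives each deformed fiber as a complete intersection separately, and one still needs Noether's normalization lemma and the identification $I(t)=J(t)$ to show the various defining polynomials are linear combinations of a single triple $(F_0(t),F_1(t),F_2(t))$, together with a separate argument (the analogue of Lemma \ref{casomenor}) that the remaining singular fibers are fibers of the reconstructed map. (ii) Having $(f_t,\mathcal G_t)$ in hand does not by itself give $\mathcal F_t=f_t^*\mathcal G_t$; the paper closes this by showing the two foliations share a non-algebraic local leaf, so that the algebraic tangency divisor is all of $\mathbb P^n$ (Lemma \ref{lemafund} and the paragraph following it) --- this step is absent from your sketch. (iii) Your concluding ``dimension count against the dimension of the ambient component'' is both circular (that dimension is not known a priori) and unnecessary: once every germ of deformation of a generic element stays in $PB(d,\nu;n)$, maximality follows from the standard curve-selection argument for a proper closed irreducible subvariety. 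Finally, your description of the ``main obstacle'' conflates this theorem with the branched one: for the generic $\mathcal G_0$ of Theorem 1.1 there are no invariant algebraic curves and no dicritical singularities at all, and the weighted blow-ups belong to the paper's new setting, not to this statement.
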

The case $\nu=1$, of linear pull-backs, was proven in \cite{caln}, whereas the case $\nu>1$, of nonlinear pull-backs, was proved in \cite{clne}. The search for new components of pull-back type was started in the Ph.D thesis of the author \cite{cs}. There we began to consider branched rational maps and foliations with algebraic invariant sets of positive dimensions.  

Let $\mathcal{F}$ be a holomorphic foliation on $\mathbb P^n$ which can be written as $\mathcal {F}= f^{*}\left(\mathcal{G}\right)$, where $\mathcal{G}$ is a  foliation in $ {\mathbb P^2}$ of degree $d\geq2$ with three invariant lines in general position, say $(XYZ)=0$, and $f:{\mathbb P^n}\DashedArrow[->,densely dashed]{\mathbb P^2}$, $deg\left(f\right)=\nu\geq2,$  $f=\left(F^\alpha_{0}:F^\beta_{1}:F^\gamma_{2}\right)$.
Denote by $PB(k,\nu,\alpha,\beta,\gamma)$ the closure in  $\mathbb{F}{\rm{ol}}\left(k,n\right)$, $n \geq 3$ of the set of foliations $\mathcal F$ of the form $f^{\ast}\mathcal G$. The degree of the foliation $\mathcal F$ is  ${k=\nu\left[\left(d-1\right)+\frac{1}{\alpha}+\frac{1}{\beta}+\frac{1}{\gamma}\right] - 2}$, as proved in \cite{cs}. Since $(f,\mathcal G)\to f^{\ast}\mathcal G$ is an algebraic parametrization of \break$PB(k,\nu,\alpha,\beta,\gamma)$ it follows that $PB(k,\nu,\alpha,\beta,\gamma)$ is an unirational irreducible algebraic subset of $\mathbb{F}{\rm{ol}}\left(k,n\right)$, $n \geq 3$. In \cite{cs} we proved the following result:
\begin{theorem}
 $PB(k,\nu,\alpha,\beta,\gamma)$ is a unirational irreducible component of \space$\mathbb{F}{\rm{ol}}\left(k,n\right)$ for all $n \geq 3$, $deg(F_{0}).\alpha = deg(F_{1}).\beta = deg(F_{2}).\gamma=\nu\geq2$, $(\alpha, \beta,\gamma) \in \mathbb N^{3}$ such that $1< \alpha < \beta < \gamma$ and $d \geq 2$. 
\end{theorem}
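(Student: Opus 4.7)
The plan is to establish stability of the pull-back form under holomorphic deformation. Since the parametrization $(f,\mathcal{G})\mapsto f^{*}\mathcal{G}$ is algebraic, $PB(k,\nu,\alpha,\beta,\gamma)$ is already a unirational irreducible algebraic subset of $\mathbb{F}\mathrm{ol}(k,n)$. It therefore suffices to show that for a generic $\mathcal{F}_{0}=f_{0}^{*}\mathcal{G}_{0}\in PB(k,\nu,\alpha,\beta,\gamma)$ and a holomorphic family $\{\mathcal{F}_{t}\}_{t\in(\mathbb{C},0)}\subset \mathbb{F}\mathrm{ol}(k,n)$ with $\mathcal{F}_{t=0}=\mathcal{F}_{0}$, one has $\mathcal{F}_{t}\in PB(k,\nu,\alpha,\beta,\gamma)$ for all sufficiently small $t$.

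First I would pin down the persistent geometric structure of a generic $\mathcal{F}_{0}=f_{0}^{*}\mathcal{G}_{0}$ with $f_{0}=(F_{0}^{\alpha}:F_{1}^{\beta}:F_{2}^{\gamma})$. The three $\mathcal{G}_{0}$-invariant lines $\{X=0\},\{Y=0\},\{Z=0\}$ pull back to three $\mathcal{F}_{0}$-invariant hypersurfaces $V_{i}=\{F_{i}=0\}$, $i=0,1,2$. Away from the indeterminacy locus of $f_{0}$ and from the pull-backs of $\mathrm{Sing}(\mathcal{G}_{0})$, the codimension-two intersections $V_{i}\cap V_{j}$ form \emph{branched Kupka components} of $\mathcal{F}_{0}$: at a generic point, $\mathcal{F}_{0}$ is locally the product of a quasi-homogeneous $1$-form in the two transverse coordinates $(F_{i},F_{j})$, with weights determined by the branching orders $\alpha,\beta,\gamma$, and a holomorphic structure along the component itself.

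The main step, and the principal obstacle, is a weighted analogue of the classical Kupka stability theorem used in \cite{caln,clne}: every branched Kupka component of $\mathcal{F}_{0}$ with branching weights persists under the deformation $\mathcal{F}_{t}$. I would carry this out via a weighted blow-up along $V_{i}\cap V_{j}$ with weights proportional to $(\alpha,\beta,\gamma)$ (hence the use of weighted projective spaces in the keywords), after which the transverse type becomes a genuine radial Kupka singularity on the exceptional fiber, so that the classical persistence of Kupka components applies; pushing the deformation back down yields holomorphic hypersurfaces $V_{i}^{t}$ deforming $V_{i}$ and invariant under $\mathcal{F}_{t}$, carrying the same branched transverse structure. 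The delicate part is passing from a local analytic deformation of the $V_{i}$ to a global algebraic one, that is, showing that each $V_{i}^{t}$ remains the zero locus of a homogeneous polynomial $F_{i}^{t}$ of the same degree as $F_{i}$; this relies on the stability of irreducible algebraic invariant hypersurfaces of foliations on $\mathbb{P}^{n}$ together with an application of Hartogs-type extension across the indeterminacy locus, which has codimension $\geq 2$ when $n\geq 3$.

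Once the $F_{i}^{t}$ are produced, set $f_{t}:=((F_{0}^{t})^{\alpha}:(F_{1}^{t})^{\beta}:(F_{2}^{t})^{\gamma})$, of the same degree $\nu$ as $f_{0}$. Because each $V_{i}^{t}$ is $\mathcal{F}_{t}$-invariant and, by the conservation of the branched Kupka transverse type, the generic fiber of $f_{t}$ is tangent to $\mathcal{F}_{t}$, the foliation $\mathcal{F}_{t}$ descends along $f_{t}$ to a unique foliation $\mathcal{G}_{t}$ on $\mathbb{P}^{2}$ of degree $d$ with three invariant lines in general position and satisfies $\mathcal{F}_{t}=f_{t}^{*}\mathcal{G}_{t}$. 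This shows $\mathcal{F}_{t}\in PB(k,\nu,\alpha,\beta,\gamma)$, so that $PB(k,\nu,\alpha,\beta,\gamma)$ is an irreducible component of $\mathbb{F}\mathrm{ol}(k,n)$.
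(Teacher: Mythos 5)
Your overall architecture --- algebraic parametrization gives a unirational irreducible subset, so it suffices to prove stability of a generic $f_0^{*}\mathcal{G}_0$ under holomorphic deformation, by first recovering a deformed map $f_t$ and then a deformed foliation $\mathcal{G}_t$ --- is exactly the strategy of the paper (which, for this particular statement, defers to \cite{cs}, but carries out the same scheme in full for the analogous case $\alpha=\beta$). Two of your steps, however, are genuine gaps rather than routine verifications. The first concerns globalization: you propose to deform the invariant hypersurfaces $(F_i=0)$ directly, invoking ``stability of irreducible algebraic invariant hypersurfaces'' plus a Hartogs argument. No such general stability principle exists; algebraic invariant hypersurfaces of foliations on $\mathbb{P}^n$ are notoriously unstable under deformation. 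What actually persists are the codimension-two fibers $V_a=(F_0=F_1=0)$, $V_b$, $V_c$ over the corner singularities of $\mathcal{G}_0$, because away from $I(f_0)$ they lie in the Kupka set and carry a rigid local product structure. Even granting their persistence, one must still prove (i) that each deformed $V_\tau(t)$ is again a \emph{complete intersection} of the same multidegree (the paper uses Sernesi's deformation theory of complete intersections), and (ii) that the three resulting pairs of polynomials can be chosen compatibly, i.e., arise from a single triple $(P_0(t),P_1(t),P_2(t))$ of the correct degrees --- this is where Noether's normalization lemma ($G|_X\equiv 0\Rightarrow G\in\langle G_0,G_1,G_2\rangle$) and the degree relations $\deg(F_0)\alpha=\deg(F_1)\beta=\deg(F_2)\gamma$ do real work. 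None of this is automatic, and your sketch does not supply a substitute.

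The second and more serious gap is the final descent step, which you assert rather than prove. Knowing that $f_t$ exists and that finitely many of its fibers are $\mathcal{F}_t$-invariant with the expected transverse type does not imply that the \emph{generic} fiber of $f_t$ is invariant; a foliation can admit finitely many invariant fibers of a fibration without being a pull-back, so ``the foliation descends along $f_t$'' does not follow from what precedes it. The paper's resolution is to construct the candidate $\mathcal{G}_t$ intrinsically, as the restriction of the weighted blow-up $\pi_w(t)^{*}\mathcal{F}_t$ at a point of $I(t)$ to the exceptional weighted projective plane, and then to compare $\mathcal{F}_t$ with $f_t^{*}\mathcal{G}_t$ through their tangency locus: since $\mathcal{G}_0$ is chosen in the generic class $M(d)$, its hyperbolic singularities have \emph{non-algebraic} local separatrices, the two foliations share a common non-algebraic invariant surface $X_t$ (your Kupka persistence plus Lemma \ref{lemafund}-type arguments), and the tangency set $\{\Omega_t\wedge\hat{\Omega}_t=0\}$, being algebraic and containing $X_t$, must be all of $\mathbb{P}^n$. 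Your proposal never uses the hypotheses that make this work (hyperbolicity and the absence of extra algebraic invariant curves), which is a reliable sign that the decisive argument is missing.
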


In this paper we continue looking for new components of branched pull back-type. In this direction  will extend the previous result to case where $\alpha=\beta\geq1$. We observe that in the case 
$\alpha=\beta>1$ we continue dealing with foliations in $\mathbb P^2$ with three invariant lines in general position. On the other hand, in the situation $\alpha=\beta=1$ we need to consider another set of foliations in $\mathbb P^2$. That is, we need foliations with one invariant line. Let us describe this last case: Let $\mathcal G$ be a foliation on $\mathbb P^2$ with one invariant straight line, say $\ell$. Consider coordinates $(X,Y,Z)\in \mathbb C^3$ such that  $\ell=\Pi_2(Z=0)$, where $\Pi_{2}:\mathbb C^{3}\backslash \left\{0\right\} \to \mathbb P^{2}$ is the natural projection. The foliation $\mathcal G$ can be represented in these coordinates by a polynomial $1$-form of the type  $ \Omega= ZA\left(X,Y,Z\right)dX+ZB\left(X,Y,Z\right)dY+C\left(X,Y,Z\right)dZ$ where by $(1)$ $XA+YB+C=0$. Let  $f: {\mathbb P^n}  \DashedArrow[->,densely dashed    ]   {\mathbb P^2}$ be a rational map represented in the coordinates\break $(X,Y,Z)\in \mathbb{C}^3$ and $W \in \mathbb C^{n+1}$ by $\tilde f=(F_{0},F_{1},F^{\gamma}_{2})$ where $F_{0},F_{1}$ and $F_{2} \in \mathbb C[W]$ are homogeneous polynomials without common factors satisfying $$deg(F_{0})=deg(F_{1})=\gamma.deg(F_{2})=\nu.$$
The pull back foliation $f^{*}(\mathcal G)$ is then defined by 
$$\tilde\eta_{[f,\mathcal G]}\left(W\right)=\left[F_{2}\left(A\circ F\right) dF_{0} + F_{2}\left(B\circ F\right) dF_{1} + {\gamma}\left(C\circ F\right) dF_{2}\right],$$
where each coefficient of  $\tilde\eta_{[f,\mathcal G]}\left(W\right)$ has degree $\Gamma=\nu\left[d+1+\frac{1}{\gamma}\right] - 1.$ The crucial point here is that the mapping $f$ sends the  hypersurface $(F_{2}=0)$ contained in its critical set over the line invariant by $\mathcal G$.

Let $PB\left(\Gamma-1,\nu,\alpha,\gamma\right)$ be the closure in $\mathbb{F}{\rm{ol}}\left(\Gamma-1,n\right)$ of the set $\left\{ \left[\tilde\eta_{[f,\mathcal G]} \right] \right\}$. It is an unirational irreducible algebraic subset of $\mathbb{F}{\rm{ol}}\left(\Gamma-1,n\right)$. We will return to this point in Section \ref{section4}.
We observe that the arguments for the cases $\alpha=\beta=1$ and $\alpha=\beta>1$ are similar. Hence we can unify the two situations in a unique statement. The main result of this work is:
\begin{main}\label{teob}$PB(\Gamma-1,\nu,\alpha,\gamma)$ is a unirational irreducible component of \break $\mathbb{F}{\rm{ol}}\left(\Gamma-1,n\right)$ for all $n \geq 3$, $ deg(F_{0}).\alpha = deg(F_{1}).\alpha =deg(F_{2}).\gamma=\nu\geq2$, such that $\alpha \geq1 $, $\gamma\geq2$, $\nu\geq2$ and $d \geq 2$ are integers. 
 \end{main}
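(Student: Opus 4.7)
Unirationality and irreducibility of $PB(\Gamma-1,\nu,\alpha,\gamma)$ are immediate from the algebraic parametrization $(f,\mathcal{G})\mapsto f^{*}\mathcal{G}$, so the content of the theorem is the stability assertion: any holomorphic family $\{\mathcal{F}_t\}_{t\in(\mathbb{C},0)}$ in $\mathbb{F}\mathrm{ol}(\Gamma-1,n)$ with $\mathcal{F}_0=f^{*}\mathcal{G}$, for a generic $(f,\mathcal{G})$, must be of the form $\mathcal{F}_t=f_t^{*}\mathcal{G}_t$ for holomorphic deformations $f_t$ of $f$ and $\mathcal{G}_t$ of $\mathcal{G}$. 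The cases $\alpha=1$ (one $\mathcal{G}$-invariant line, $\gamma\geq 2$) and $\alpha\geq 2$ (three invariant lines in general position) can be handled by parallel arguments, so I would describe them uniformly.

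The first step is a careful analysis of the singular set of $\mathcal{F}=f^{*}\mathcal{G}$. For each singularity $p$ of $\mathcal{G}$ that does not lie on an invariant line, the preimage $K_p:=f^{-1}(p)$ is an irreducible smooth codimension-two subvariety of $\mathbb{P}^n$ along which $\mathcal{F}$ is of Kupka type, with transverse type equal to the linear part of $\mathcal{G}$ at $p$. For generic $(f,\mathcal{G})$ these transverse types are hyperbolic and mutually non-resonant, so the stability theorem for Kupka components (in the form used in \cite{clne} and the author's previous work \cite{cs}) applies: each $K_p$ persists as a Kupka component $K_p^{t}$ of $\mathcal{F}_t$, with the same transverse type. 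This furnishes the germ of a candidate deformation $\mathcal{G}_t$ at the non-invariant-line singularities of $\mathcal{G}$.

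The main obstacle is the behavior along the critical hypersurfaces $V_i:=(F_i=0)$ that are collapsed by $f$ onto the invariant lines of $\mathcal{G}$. Because the components $F_i$ appear with exponents $\alpha$ or $\gamma$, the foliation $\mathcal{F}$ is not Kupka along $V_i$: transversally to a generic point of $V_i$ it has a non-reduced $\alpha$-fold (resp.\ $\gamma$-fold) tangency with $V_i$ superimposed on the germ of $\mathcal{G}$ along the invariant line. Here I would use a weighted blow-up along $V_i$ with weights matching the corresponding exponent, so that the strict transform acquires a genuine Kupka-type structure along the exceptional divisor, whose transverse type encodes the germ of $\mathcal{G}$ along the image line. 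Applying a Kupka-type persistence argument on the resolved ambient space then forces a deformed hypersurface $V_i^{t}$ along which $\mathcal{F}_t$ has exactly the same weighted transverse profile; in particular, the exponents $\alpha$ and $\gamma$ are rigid data of the deformation. This weighted blow-up analysis, together with the verification that the exponent is preserved, is the technical heart of the proof.

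Finally, the transverse types along the $K_p^{t}$ and the $V_i^{t}$ glue into a holomorphic family $\mathcal{G}_t$ of degree-$d$ foliations on $\mathbb{P}^2$ with the prescribed invariant-line configuration, while the Kupka fibres $K_p^{t}$ together with the weighted critical divisors $V_i^{t}$ determine a holomorphic family of rational maps of the prescribed shape $f_t=(F_{0,t}^{\alpha},F_{1,t}^{\alpha},F_{2,t}^{\gamma})$ (or the $\alpha=1$ variant). The identity $\mathcal{F}_t=f_t^{*}\mathcal{G}_t$ is then verified on the Zariski-open complement of the indeterminacy and critical loci and extended by the integrability condition, which completes the proof of stability and hence the assertion that $PB(\Gamma-1,\nu,\alpha,\gamma)$ is an irreducible component of $\mathbb{F}\mathrm{ol}(\Gamma-1,n)$.
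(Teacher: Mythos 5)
Your high-level plan (persistence of the singular structure, reconstruction of $f_t$ and $\mathcal G_t$, identification $\mathcal F_t=f_t^{*}\mathcal G_t$) matches the shape of the paper's argument, but the technical heart is located in the wrong place and the decisive final step is missing. The weighted structure of $\mathcal F=f^{*}\mathcal G$ does not live along the critical hypersurfaces $V_i=(F_i=0)$: at a generic point of $(F_2=0)$ the saturated pull-back form is $x_2(A\,dx_0+B\,dx_1)+\gamma C\,dx_2$ with $C\neq0$, i.e.\ the foliation is \emph{regular} there and $(F_2=0)$ is merely an invariant hypersurface, so there is no Kupka-type profile along $V_i$ to make persist, and a ``weighted blow-up along a hypersurface'' is not a meaningful resolution step. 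The weighted phenomenon actually sits on the indeterminacy locus $I(f)$ (codimension three), where the local $1$-form $\eta=x_2A\,dx_0+x_2B\,dx_1+\gamma C\,dx_2$ with $\tilde f=(x_0,x_1,x_2^{\gamma})$ is quasi-homogeneous of type $(\gamma{:}\gamma{:}1)$ with respect to $S=\gamma x_0\partial_{x_0}+\gamma x_1\partial_{x_1}+x_2\partial_{x_2}$; the paper's Lemmas \ref{liecartan}--\ref{lema5.6} and the GK-stability result (Proposition \ref{prop5.5}, from \cite{ccgl}) are what guarantee that this type --- hence the exponent $\gamma$ --- is rigid under deformation. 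Note also that the fibres over singularities of $\mathcal G$ \emph{on} the invariant line are Kupka as well (Section \ref{section5.1} treats exactly $\tau=[1:0:0]$), so your dichotomy ``on/off the invariant line'' is not where the difficulty lies.

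Two further steps that you assert but do not supply are genuine pieces of the proof. First, recovering $f_t$ from the deformed fibres requires showing that the deformed complete intersections $V_a(t),V_b(t),V_c(t)$ can be cut out by a single compatible triple $(P_0(t),P_1(t),P_2(t))$; the paper does this with Sernesi's deformation theory plus Noether's normalization lemma, and then a separate argument (Lemma \ref{casomenor}) that the remaining singular curves are fibres. Second, and most importantly, once candidates $f_t$ and $\mathcal G_t$ are produced (the paper obtains $\mathcal G_t$ by restricting the weighted blow-up of $\mathcal F_t$ at a point of $I(t)$ to the exceptional $\mathbb P^2_{[\gamma,\gamma,1]}$, not by ``gluing transverse types''), the identity $\mathcal F_t=f_t^{*}\mathcal G_t$ does not follow from agreement on a Zariski-open set plus integrability --- a priori the two foliations simply differ. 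The paper proves it by exhibiting a common \emph{non-algebraic} local leaf (Lemma \ref{lemafund}, via the maximum principle applied to the compact curves $\Sigma_\epsilon$), which forces the algebraic tangency divisor $D(t)=\{\Omega(t)\wedge\hat\Omega(t)=0\}$ to be all of $\mathbb P^3$. Without an argument of this kind your proof does not close.
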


\section {Branched rational maps}

Let $f : {\mathbb P^n}  \DashedArrow[->,densely dashed    ]   {\mathbb P^2}$ be a rational map and $\tilde{f}: {\mathbb C^{n+1}} \to {\mathbb C^3}$ is it natural lifting in homogeneous coordinates.
%
%
%
The \emph{indeterminacy locus} of $f$ is, by definition, the set $I\left(f\right)=\Pi_{n}\left(\tilde{f}^{-1}\left(0\right)\right)$. We characterize the set of rational maps used throughout this text as follows:

\begin{definition} We denote by $BRM\left(n,\nu,\alpha,\gamma\right)$ the set of maps \break $\left\{f: \mathbb P^n  \DashedArrow[->,densely dashed    ]   \mathbb P^2\right\}$ of degree $\nu$ given by $f=\left(F_{0}^\alpha:F_{1}^\alpha:F^\gamma_{2}\right)$  where $F_{0},F_{1}$ and $F_{2}$ are homogeneous polynomials without common factors, with $deg\left(F_{0}\right).\alpha=$ $deg\left(F_{1}\right).\alpha$ $=deg\left(F_{2}\right).\gamma$ $=\nu$, where $\nu\geq 2$, $\alpha\geq1$  and $\gamma \geq 2$ are integers.\end{definition}

Let us fix some coordinates $\left(z_{0},...,z_{n} \right)$ on $\mathbb C^{n+1}$ and $\left(X,Y,Z\right)$ on $\mathbb C^{3}$ and denote by $\left(F^\alpha_{0},F^\alpha_{1},F^\gamma_{2}\right)$ the components of $f$ relative to these coordinates. Let us note that the indeterminacy locus $I(f)$ is the intersection of the three hypersurfaces $(F_{0}=0)$, $(F_{1}=0)$ and $(F_{2}=0)$.

\begin{definition}\label{generic} We say that $f  \in BRM\left(n,\nu,\alpha,\gamma\right)$ is $generic$ if for all $p \in$ $\tilde{f}^{-1}\left(0\right)\backslash\left\{0\right\}$ we have $dF_{0}\left(p\right)\wedge dF_{1}\left(p\right)\wedge dF_{2}\left(p\right) \neq 0.$  
\end{definition}

This is equivalent to saying that $f  \in BRM\left(n,\nu,\alpha,\gamma\right)$ is $generic$ if $I(f)$ is the transverse intersection of the $3$ hypersurfaces $(F_{0}=0)$, $(F_{1}=0)$ and $(F_{2}=0)$.  As a consequence we have that  the set $I(f)$ is smooth. For instance, if $n=3$, $f$ is generic and $deg(f)=\nu$, then by Bezout's theorem $I\left(f\right)$ consists of $\frac{\nu^{3}}{\alpha^2\gamma}$ distinct points with multiplicity ${\alpha^2\gamma}$. If $n=4$, then $I\left(f\right)$ is a smooth connected algebraic curve in $\mathbb P^4$ of degree $\frac{\nu^{3}}{\alpha^2\gamma}$. In general, for $n \geq 4$, $I\left(f\right)$ is a smooth connected algebraic submanifold of $\mathbb P^n$ of degree $\frac{\nu^{3}}{\alpha^2\gamma}$ and codimension three. 

Denote $\nabla F_{k}= (\frac{\partial F_{k}}{\partial z_{0}},...,\frac{\partial F_{k}}{\partial z_{n}})$. Consider the derivative matrix 
{\tiny{ $$M=\begin{bmatrix}
 \alpha\left(F_{0}^{\alpha-1}\right)\nabla F_{0} \\ \alpha\left(F_{1}^{\alpha-1}\right)\nabla F_{1} \\  \gamma\left(F_{2}^{\gamma-1}\right)\nabla F_{2}  
 \end{bmatrix}.$$ }}
The critical set of $\tilde{f}$ is given by the points of $ \mathbb C^{n+1}\backslash \ {0}$ where rank$(M)\leq3$; it is the union of two sets. The first is given by   the set of $\left\{ P \in \mathbb C^{n+1}\backslash \ {0} \right\}=X_{1}$ such that the rank of the following matrix 
 {\tiny{$$N=\begin{bmatrix}
 \nabla F_{0} \\ \nabla F_{1} \\ \nabla F_{2}  
 \end{bmatrix}$$ }}is smaller than $3.$ The second is the subset
$$X_2=\left\{ P \in \mathbb C^{n+1}\backslash \left\{0\right\} |\left(F_{0}^{\alpha-1}\right)\left(F_{1}^{\alpha-1}\right)\left(F_{2}^{\gamma-1}\right)\left(P\right)=0 \right\}.$$

Denote $P\left(f\right)=\Pi_{n}\left(X_{1} \cup X_{2}\right)$. The set of generic maps will be denoted by $Gen\left(n,\nu,\alpha,\gamma\right)$. We state the following result whose proof is standard in algebraic geometry:
\begin{proposition} $Gen\left(n,\nu,\alpha,\gamma\right)$ is a Zariski dense subset of $BRM\left(n,\nu,\alpha,\gamma\right)$.
\end{proposition}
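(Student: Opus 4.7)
The plan is to identify $BRM(n,\nu,\alpha,\gamma)$ as an open subset of a product of projective spaces, realize $Gen(n,\nu,\alpha,\gamma)$ inside as a Zariski open subset defined by transversality of the three hypersurfaces $(F_i = 0)$, and produce one explicit generic map to guarantee density. Setting $d_0 = d_1 = \nu/\alpha$ and $d_2 = \nu/\gamma$, the natural parameter space
\[\mathcal{P} := \mathbb{P}H^0(\mathcal{O}_{\mathbb{P}^n}(d_0)) \times \mathbb{P}H^0(\mathcal{O}_{\mathbb{P}^n}(d_1)) \times \mathbb{P}H^0(\mathcal{O}_{\mathbb{P}^n}(d_2))\]
is irreducible, and the locus of triples $(F_0,F_1,F_2)$ sharing a common factor is Zariski closed in $\mathcal{P}$ (cut out by suitable resultant conditions); its complement is an irreducible open subvariety of $\mathcal{P}$ that parameterizes $BRM(n,\nu,\alpha,\gamma)$.

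Openness of $Gen$ I would establish via the incidence variety
\[\Sigma := \bigl\{((F_0,F_1,F_2),[p]) \in \mathcal{P} \times \mathbb{P}^n : F_i(p)=0 \text{ for } i=0,1,2,\ \operatorname{rank} N(p) < 3\bigr\},\]
where $N$ is the matrix with rows $\nabla F_0, \nabla F_1, \nabla F_2$. This is closed, being cut out by the $F_i$ together with the $3 \times 3$ minors of $N$. Since $\mathbb{P}^n$ is complete, the projection $\pi_1 : \Sigma \to \mathcal{P}$ is closed, and its image is exactly the complement of $Gen$ in $\mathcal{P}$; hence $Gen$ is Zariski open.

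It remains to exhibit one element of $Gen$. Since the linear systems $|\mathcal{O}_{\mathbb{P}^n}(d_i)|$ are very ample and in particular base-point-free, three successive applications of Bertini's theorem yield a triple whose zero loci intersect in a smooth subvariety of $\mathbb{P}^n$ of codimension $3$, equivalently, for which $\nabla F_0, \nabla F_1, \nabla F_2$ are linearly independent at every common zero; such a triple automatically has no common factor, since otherwise the intersection would contain a component of codimension $1$. Combining the three steps, $Gen(n,\nu,\alpha,\gamma)$ is a nonempty Zariski open subset of the irreducible variety $BRM(n,\nu,\alpha,\gamma)$, and is therefore Zariski dense. The main subtlety lies in passing from Bertini's smoothness statement to the gradient independence condition, but at smooth points of a complete intersection of expected codimension the two are tautologically equivalent, so no further argument is required.
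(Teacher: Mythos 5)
The paper offers no proof of this proposition, stating only that it is ``standard in algebraic geometry''; your argument is exactly that standard argument --- irreducibility of the parameter space of triples $(F_0,F_1,F_2)$, Zariski-openness of the transversality condition via a closed incidence correspondence and the completeness of $\mathbb{P}^n$, and nonemptiness via Bertini --- and it is correct, including the correct handling of the passage from smoothness of the complete intersection to independence of the gradients. There is nothing in the paper to compare against and no gap to report.
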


Once the case of foliations which are pull-backs of three invariant straight have been already discussed in \cite{cs}. We will concentrate only on the case where $\alpha=1$. The case $\alpha>1$ is obtained following the same ideas. 
\section{Foliations with one invariant line}

\subsection{Basic facts} 
Denote by $I_1(d,2)$ the set of the holomorphic foliations on $\mathbb P^{2}$ of degree $d \geq 2$ that leaves the line $Z=0$ invariant. We observe that any foliation which has $1$ invariant straight line can be carried to one of  these by a linear automorphism of $\mathbb P^{2}$.
The relation
$XA+YB+C=0$
enables to parametrize $I(d,2)$  as follows
\begin{align*}
  {\rm {H}}^0(\mathbb P^2,\mathcal O_{\mathbb P^2}(d-1))^{\times 2}  &\to {\rm {H}}^0(\mathbb P^2,\mathcal O_{\mathbb P^2}(d-1))^{\times 3} \\
                         (A,B) &\mapsto (A,B,-XA-YB).
\end{align*}
We let the group of linear automorphisms of $\mathbb P^{2}$ act on $I_1(d,2)$. After this procedure we obtain a set of foliations of degree $d$ that we denote by $Il_{1}(d,2)$.\par We are interested in making deformations of foliations and for our purposes we need a subset of  $Il_{1}(d,2)$ with good properties (foliations having few algebraic invariant curves and only hyperbolic singularities). We explain this properties in detail. Let $q \in U$ be an isolated singularity of a foliation $\mathcal G$ 
defined on an open subset of $U\subset \mathbb C^{2}.$ We say that $q$ is $nondegenerate$ if there exists a holomorphic vector field $X$ tangent to  $\mathcal G$ in a neighborhood of $q$ such that $DX(q)$ is nonsingular. In particular $q$ is an isolated singularity of $X.$
Let $q$ be a nondegenerate singularity of $\mathcal G$. The \textit{characteristic numbers} of $q$ are the quotients $\lambda$ and $\lambda^{-1}$ of the eingenvalues of $DX(q)$, which do not depend on the vector field $X$ chosen. If $\lambda \notin \mathbb Q_{+}$ then $\mathcal G$ exhibits exactly two (smooth and transverse) \textit{local separatrices} at $q$,  $S_{q}^{+}$ and $S_{q}^{-}$ with eigenvalues $\lambda_{q}^{+}$ and $\lambda_{q}^{-}$ and which are tangent to the characteristic directions of a vector field $X$. The characteristic numbers (also called Camacho-Sad index) of these local separatrices are given by 
$$I(\mathcal G,S_{q}^{+})=\frac{\lambda_{q}^{-}}{\lambda_{q}^{+}} \ \text{and} \
I(\mathcal G,S_{q}^{-})=\frac{\lambda_{q}^{+}}{\lambda_{q}^{-}}.$$
The singularity is \textit{hyperbolic} if the characteristic numbers are nonreal. We introduce the following spaces of foliations:

\begin{enumerate}
\item [(1)]$ND(d,2)=\{ \mathcal{G} \in \mathbb{F}{\rm{ol}}(d,2);$ the singularities of $\mathcal G$ are nondegenerate$\},$
\item[(2)] $\mathcal{H}(d,2)=\{\mathcal{G} \in ND(d,2);$ any characteristic number $\lambda$ of $\mathcal{G}$ satisfies $\lambda \in \mathbb C \backslash \mathbb R\}.$
\end{enumerate}
It is a well-known fact  \cite{ln1} that $\mathcal{H}(d,2)$ contains an open and dense subset of $\mathbb{F}{\rm{ol}}(d,2)$. Denote by $A(d)=Il_{1}(d,2)\cap \mathcal{H}(d,2).$ Observe that $A(d)$ is a Zariski dense subset of $Il_{1}(d,2)$. Concerning the set $ND(d,2)$, we have the following result, proved in \cite{ln1}.

\begin{proposition}
Let  $\mathcal G_{0} \in ND(d,2)$. Then $\#Sing(\mathcal G_{0})=d^2+d+1=N(d)$. Moreover if $Sing(\mathcal G_{0})=\{p_{1}^0,...,p_{N}^0\}$ where $p_{i}^0\neq p_{j}^0$ if $i\neq j$, then there are connected neighborhoods $U_{j} \ni p_{j}$, pairwise disjoint, and holomorphic maps $\phi_{j}:\mathcal U\subset ND(d,2) \to U_{j}$, where $\mathcal U \ni \mathcal G_{0}$ is an open neighborhood, such that for $\mathcal G \in \mathcal U$, $(Sing(\mathcal G)\cap U_{j})=\phi_{j}(\mathcal G)$  is a nondegenerate singularity.  In particular, $ND(d,2)$ is open in $\mathbb{F}{\rm{ol}}(2,d)$. Moreover, if $\mathcal G_{0} \in \mathcal{H}(d,2)$ then the two local separatrices as well as their associated eigenvalues depend analytically on $\mathcal {G}$.
\end{proposition}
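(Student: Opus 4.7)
The plan is to separate the proposition into three assertions --- the exact cardinality of the singular set, the persistence of nondegenerate singularities (and the consequent openness of $ND(d,2)$), and the holomorphic dependence of the separatrices and their eigenvalues in the hyperbolic case --- and to handle each with a classical tool.

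For the first assertion, I would invoke the Darboux--Jouanolou count: a codimension one holomorphic foliation of degree $d$ on $\mathbb{P}^{2}$ has exactly $d^{2}+d+1$ singular points, counted with Milnor number. Since $\mathcal{G}_{0}\in ND(d,2)$ each singularity is nondegenerate and therefore has Milnor number one, which forces $\#Sing(\mathcal{G}_{0})=d^{2}+d+1=N(d)$ distinct points.

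For the persistence and openness, I would work locally. Near each $p_{j}^{0}$ choose an affine chart together with a holomorphic vector field $X_{\mathcal{G}_{0}}$ tangent to $\mathcal{G}_{0}$, vanishing at $p_{j}^{0}$, with $DX_{\mathcal{G}_{0}}(p_{j}^{0})$ invertible. The parametrization of ${\mathbb F}{\rm ol}(d,2)$ by homogeneous $1$-forms allows one to produce, for $\mathcal{G}$ in a neighborhood $\mathcal{U}$ of $\mathcal{G}_{0}$, a family $X_{\mathcal{G}}$ of local vector fields depending holomorphically on $\mathcal{G}$. The holomorphic implicit function theorem applied to $X_{\mathcal{G}}(p)=0$ at $(\mathcal{G}_{0},p_{j}^{0})$ furnishes the desired map $\phi_{j}:\mathcal{U}\to U_{j}$, with $DX_{\mathcal{G}}(\phi_{j}(\mathcal{G}))$ still invertible by continuity. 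Shrinking $\mathcal{U}$ so that the $U_{j}$ are pairwise disjoint, one obtains $N(d)$ distinct nondegenerate singularities of $\mathcal{G}$; conservation of the total Milnor number $N(d)$ rules out any further singularity in $\mathbb{P}^{2}\setminus\bigcup_j U_{j}$, so $\mathcal{G}\in ND(d,2)$ and openness follows.

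For $\mathcal{G}_{0}\in\mathcal{H}(d,2)$, the characteristic polynomial of $DX_{\mathcal{G}}(\phi_{j}(\mathcal{G}))$ has coefficients depending holomorphically on $\mathcal{G}$; since the eigenvalues at $\mathcal{G}_{0}$ are distinct (the characteristic number $\lambda$ is non-real, hence different from $1$), they extend to two holomorphic branches on a possibly smaller neighborhood, and likewise for the eigendirections. Non-realness of $\lambda$ places each singularity in the Poincar\'e domain and excludes any resonance, so I would invoke Poincar\'e's linearization theorem with holomorphic parameters (equivalently the holomorphic stable manifold theorem with parameters applied to each eigendirection) to conclude that the two separatrices $S_{\phi_{j}(\mathcal{G})}^{\pm}$ are holomorphic graphs over the corresponding eigenspaces and depend holomorphically on $\mathcal{G}$. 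The main technical point, which I expect to be the principal obstacle, is precisely this last step: the classical linearization/stable-manifold results must be invoked in a version respecting holomorphic parameter dependence --- a standard extension, but one that must be cited carefully to cover simultaneously all $N(d)$ singularities.
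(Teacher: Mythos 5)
Your argument is correct, but note that the paper does not actually prove this proposition: it is quoted verbatim from the literature (the text says ``proved in \cite{ln1}'', i.e.\ Lins Neto's \emph{Algebraic solutions of polynomial differential equations and foliations in dimension two}), so there is no in-paper proof to compare against. What you wrote is essentially the standard argument from that reference: the count $\#Sing(\mathcal G_0)=d^2+d+1$ via the Darboux/Chern-class computation together with Milnor number one at nondegenerate zeros; persistence of each singularity and openness of $ND(d,2)$ via the holomorphic implicit function theorem applied to the dual local vector field (which depends polynomially on the coefficients of the defining $1$-form in an affine chart), with conservation of the total Milnor number $N(d)$ excluding extra singularities outside $\bigcup_j U_j$; and, for $\mathcal G_0\in\mathcal H(d,2)$, holomorphic dependence of the eigenvalues (simple roots of the characteristic polynomial, since $\lambda\neq 1$) and of the two separatrices. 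Your observation that $\lambda\notin\mathbb R$ places the singularity in the Poincar\'e domain with no resonances, so that linearization with holomorphic parameters applies, is exactly the right way to get analytic dependence of $S^{\pm}_q$; the only caution is that one should cite a parametrized version of Poincar\'e's theorem (or of the invariant-manifold construction tangent to each eigendirection) rather than the pointwise statement, which you already flag. No gaps.
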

\vskip0.2cm
 In the paper \cite{lnssc} which is related to the topological rigidity of foliations on $\mathbb P^2$ in the spirit of Ilyashenko's works. The authors have proved the following useful result see\cite[Theorem 3, p.385]{lnssc}. \begin{theorem} Let $d\geq 2$. There exists an non empty open and dense subset $M\left(d\right)\subset A\left(d\right)$, such that if $\mathcal G \in M\left(d\right)$  then the only algebraic invariant curve of $\mathcal G$ is the line. 
\end{theorem}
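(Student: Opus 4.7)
My plan is to show that $M(d) \subset A(d)$ is open and dense by combining two classical ingredients: a Carnicer-type degree bound for algebraic invariant curves of foliations with only non-dicritical singularities, together with the Camacho--Sad index formula applied at the hyperbolic singularities of foliations in $A(d)$. Since every $\mathcal G \in A(d) \subset ND(d,2)$ has only nondegenerate---hence non-dicritical---singularities, one obtains a uniform upper bound $e_{0}=e_{0}(d)$ on the degree of any algebraic invariant curve of $\mathcal G$. It therefore suffices to show, for each fixed $1\le e \le e_{0}$, that the locus
\[
B_{e}=\{\,\mathcal G \in A(d)\,:\,\mathcal G \text{ admits an algebraic invariant curve of degree } e \text{ different from } \ell\,\}
\]
is a proper analytic subset of $A(d)$; then $M(d)=A(d)\setminus\bigcup_{e=1}^{e_{0}}B_{e}$ is automatically open, and density reduces to avoiding a finite union of proper analytic conditions.

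To control $B_{e}$ I would use Camacho--Sad. If $C\neq\ell$ is an algebraic invariant curve of degree $e$ for some $\mathcal G \in A(d)$, then
\[
\sum_{p\in Sing(\mathcal G)\cap C} I(\mathcal G,C,p)=e^{2}.
\]
At each hyperbolic singularity $p$, the local index $I(\mathcal G,C,p)$ equals one of the two characteristic numbers $\lambda_{p}^{\pm}$, which are non-real by hyperbolicity and depend holomorphically on $\mathcal G$ by the quoted proposition on $ND(d,2)$. Hence the existence of $C$ forces a finite combinatorial datum---a subset $S\subset Sing(\mathcal G)$ and a sign assignment $p\mapsto \varepsilon_{p}\in\{+,-\}$---satisfying
\[
\sum_{p\in S} \lambda_{p}^{\varepsilon_{p}}(\mathcal G)=e^{2}\in \mathbb{Z}_{>0}.
\]
Each such identity cuts out a proper real-analytic subvariety of $A(d)$ provided the corresponding characteristic numbers are not all constant as $\mathcal G$ varies on $A(d)$.

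The main obstacle, and the heart of the argument, is to verify that the characteristic-number map $\Lambda:\mathcal G \mapsto \bigl(\lambda_{p}^{\pm}(\mathcal G)\bigr)_{p}$ varies nontrivially enough on $A(d)$ to simultaneously violate all the finitely many identities above. Apart from the rigid Camacho--Sad relation on the distinguished line $\ell$ itself (which only involves the singularities of $\mathcal G$ lying on $\ell$), the characteristic numbers at the off-line singularities should be perturbable essentially independently by moving $\mathcal G$ inside $A(d)$; this can be checked either by computing the differential of $\Lambda$ at a convenient model foliation, or, more concretely, by exhibiting a single explicit $\mathcal G_{0}\in A(d)$ whose characteristic numbers are $\mathbb{Q}$-linearly independent modulo the forced trace relations, and then invoking the analyticity of each $B_{e}$ to propagate the conclusion. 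Combined with Carnicer's finite list of possible degrees, this yields that $M(d)$ is non-empty, Zariski-open, and dense in $A(d)$.
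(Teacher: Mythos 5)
First, a point of comparison: the paper does not prove this statement at all --- it is quoted from Lins Neto--Sad--Sc\'ardua \cite[Theorem 3, p.~385]{lnssc} --- so there is no in-paper argument to measure yours against. Your outline does follow the strategy of that reference: bound the degree of a putative invariant curve $C\neq\ell$, observe that $C$ must meet $Sing(\mathcal G)$ and that at a hyperbolic singularity it is locally one or both of the two smooth transverse separatrices, and use the Camacho--Sad index theorem to force a relation $\sum_{p\in S}\lambda_{p}^{\varepsilon_{p}}(\mathcal G)+m=e^{2}$ (with $m\in\mathbb Z_{\geq0}$ accounting for normal crossings); finitely many such relations then cut out the locus to be avoided. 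The architecture is sound.

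The genuine gap sits exactly where you place ``the heart of the argument'': you never establish that these finitely many analytic conditions are \emph{proper} subsets of $A(d)$, i.e.\ that for each admissible datum $(S,\varepsilon,e)$ some $\mathcal G\in A(d)$ violates the corresponding identity. The whole theorem reduces to this point, and ``should be perturbable essentially independently'' is an assertion, not a proof; without carrying out the differential computation for $\Lambda$ or exhibiting an explicit example in each degree $d$, nothing excludes $M(d)=\emptyset$. Two secondary issues. The implication ``nondegenerate $\Rightarrow$ non-dicritical'' is false in general (the radial singularity is nondegenerate and dicritical); what saves the degree bound is that on $A(d)$ the characteristic numbers are non-real, hence no singularity is dicritical. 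And when $C$ contains both separatrices at some $p$, the local Camacho--Sad contribution is $\lambda_{p}+\lambda_{p}^{-1}+2$, which can be real even for non-real $\lambda_{p}$ (e.g.\ $|\lambda_{p}|=1$); so the naive argument ``a sum of non-real numbers cannot equal $e^{2}$'' does not close the case of nodal invariant curves, and you must fall back on the properness of the analytic condition --- which is precisely the step you have not supplied.
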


\section{Ramified pull-back components - Generic conditions}\label{section4}

Let us fix a coordinate system $(X,Y,Z)$ on $\mathbb P^2$ and denote by $\ell$ the straight line that corresponds to the plane $Z=0$ in $\mathbb C^3$, respectively.
Let us denote by $\tilde M\left(d\right)$ the subset $M\left(d\right)\cap I_1(d,2)$.
\begin{definition} Let $f \in Gen\left(n,\nu,1,\gamma\right)$. We say that  
$\mathcal G \in M\left(d\right)$ is in generic position with respect to $f$ if $\left[Sing\left(\mathcal G\right)\cap Y_{2}\right]=\emptyset,$ where 
$$Y_{2}(f)=Y_{2}:=\displaystyle\Pi_2\left[\tilde{f}\left\{w \in \mathbb C^{n+1} |dF_{0}\left(w\right)\wedge dF_{1}\left(w\right)\wedge dF_{2}\left(w\right) = 0\right\}\right]$$ and  $\ell$ is $\mathcal G$-invariant.
\end{definition}

In this case we say that $\left(f,\mathcal{G}\right)$ is a generic pair. In particular, when we fix a map $f\in Gen(n,\nu,1,\gamma)$ the set $\mathcal{A}=\left\{\mathcal{G} \in M\left(d\right) | Sing\left(\mathcal G\right) \cap Y_{2}(f)= \emptyset \right\}$ is an open and dense subset in $M(d)$ \cite{lnsc}, since $VC(f)$ {is an algebraic curve in}  ${\mathbb P^2}.$ The set $U_{1}:=\{ (f,\mathcal G) \in Gen(n,\nu,1,\gamma)\times\tilde M\left(d\right)| Sing\left(\mathcal G\right)\cap Y_{2}(f)= \emptyset \}$ is an open and dense subset of $Gen(n,\nu,1,\gamma)\times\tilde M\left(d\right)$. Hence the set $\mathcal W:=\left\{\tilde\eta_{[f,\mathcal G]}| \left(f,\mathcal G\right)\in U_{1}\right\}$ is an open and dense subset of $PB\left(\Gamma-1,\nu,1,\gamma\right)$.
    \begin{proposition} \label{graupargenerico}If $\mathcal F$ comes from a generic pair, then the degree of $\mathcal F$ is $$\nu\left[d+1+\frac{1}{\gamma}\right] - 2.$$ 
   \end{proposition}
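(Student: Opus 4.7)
The plan is to compute the degree of the coefficients of $\tilde\eta_{[f,\mathcal G]}$ explicitly and then verify that its singular scheme has codimension at least two, so that no cancellation lowers the degree further. Starting from $\Omega = Z A\,dX + Z B\,dY + C\,dZ$ with $\deg A = \deg B = d$ and $\deg C = d+1$ (forced by homogeneity and the relation $XA+YB+C=0$), I pull back by $\tilde f = (F_0, F_1, F_2^\gamma)$:
\[
\tilde f^{\,*}\Omega = F_2^\gamma (A\circ f)\,dF_0 + F_2^\gamma(B\circ f)\,dF_1 + \gamma\, F_2^{\gamma-1}(C\circ f)\,dF_2,
\]
so $F_2^{\gamma-1}$ factors out, leaving $\tilde\eta_{[f,\mathcal G]}$ exactly as stated. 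Using $\deg F_0 = \deg F_1 = \nu$ and $\deg F_2 = \nu/\gamma$, a straightforward degree count gives each coefficient degree equal to $\Gamma = \nu[d+1+\tfrac{1}{\gamma}] - 1$, so the foliation degree is at most $\Gamma - 1 = \nu[d+1+\tfrac{1}{\gamma}] - 2$.

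The substantive step is to show the singular scheme of $\tilde\eta_{[f,\mathcal G]}$ has codimension $\geq 2$, i.e., the three coefficients have no common irreducible factor $H$. The most delicate candidate is $H = F_2$, since $F_2$ already divides the first two summands. If $F_2 \mid \tilde\eta_{[f,\mathcal G]}$, then $F_2 \mid \gamma(C\circ f)\,dF_2$; genericity of $f$ (Definition \ref{generic}) implies $\nabla F_2$ does not vanish on an open subset of $\{F_2 = 0\}$, so $F_2 \nmid dF_2$, forcing $F_2 \mid C\circ f$. Restricting to $\{F_2 = 0\}$, and observing that $(F_0 : F_1): \{F_2 = 0\} \DashedArrow \ell$ is surjective for a generic pair (since $n \geq 3$), we would get $C(X,Y,0) \equiv 0$, i.e., $Z \mid C$. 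But then $\Omega$ itself has $Z$ as a common factor and $\mathcal G$ would have degree strictly less than $d$, contradicting the hypothesis.

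For an irreducible $H \ne F_2$ dividing all coefficients, the hypersurface $\{H = 0\} \subset \mathbb P^n$ would lie in the zero locus of $\tilde\eta_{[f,\mathcal G]}$. At a smooth point of $\{H=0\}$ outside the critical locus of $f$ and the indeterminacy locus, the 1-form $\tilde\eta_{[f,\mathcal G]}$ coincides (up to non-vanishing factors) with $f^{*}\Omega_{\mathcal G}$, so vanishing of $f^{*}\Omega_{\mathcal G}$ on such a point would force $f(\{H=0\})$ to lie in the zero locus of $\Omega_{\mathcal G}$, which on $\mathbb P^2$ is the finite set $\mathrm{Sing}(\mathcal G)$: this is incompatible with $f$ being nonconstant on the hypersurface. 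What remains is to exclude the case when $\{H=0\}$ sits inside the critical locus of $f$; here I use that the generic-pair hypothesis places $\mathrm{Sing}(\mathcal G)$ off $Y_2(f)$, and that $\mathcal G \in \tilde M(d)$ has no algebraic invariant curve other than $\ell$, so no additional invariant hypersurface can appear in $\mathbb P^n$ beyond the one already absorbed in $F_2^{\gamma-1}$.

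The main obstacle I expect is the last paragraph: the bookkeeping of degrees is routine, and the $F_2$-case is direct, but ruling out spurious factors $H \neq F_2$ requires combining in a careful way the genericity of $f$ (smooth, transverse indeterminacy locus; control of the critical locus via $X_1 \cup X_2$) with the restricted algebraic content of $\mathcal G \in \tilde M(d)$ (only $\ell$ is invariant, only isolated nondegenerate singularities). Once this is in place, codimension of the singular scheme is $\geq 2$ and $\deg \mathcal F = \Gamma - 1 = \nu[d+1+\tfrac{1}{\gamma}] - 2$, as claimed.
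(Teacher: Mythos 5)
The paper itself does not prove this proposition---the text only says the proof ``can be obtained as in the case treated in \cite{cs}''---so there is no argument here to compare against line by line. That said, your strategy is the natural one and surely the intended one: pull back $\Omega$, strip off the factor $F_2^{\gamma-1}$, count degrees to get coefficients of degree $\Gamma=\nu\left[d+1+\frac{1}{\gamma}\right]-1$, and then show the resulting form has singular set of codimension $\geq 2$, so that the foliation degree is exactly $\Gamma-1$. The bookkeeping is correct, and your exclusion of the factor $H=F_2$ (reduce to $F_2\mid C(F_0,F_1,0)$, use dominance of $(F_0:F_1)|_{\{F_2=0\}}$ onto $\ell$ to get $Z\mid C$, and conclude that $\mathcal G$ would drop degree) is sound.

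The one step where your justification is off target is the exclusion of an irreducible factor $H\neq F_2$ whose zero locus sits inside the critical set of $f$. Neither the condition $Sing(\mathcal G)\cap Y_{2}(f)=\emptyset$ nor the fact that $\ell$ is the only algebraic invariant curve of $\mathcal G$ is the right tool there: a common factor of the coefficients of $\tilde\eta_{[f,\mathcal G]}$ need not be an invariant hypersurface of the saturated foliation, so the dynamical rigidity of $\mathcal G$ says nothing about it. What actually disposes of this case is the structure of the critical set described in Section 2: for $\alpha=1$ its hypersurface part $X_2$ is exactly $\{F_2=0\}$, which you have already handled, while $X_1=\{\operatorname{rank}(N)<3\}$ is a determinantal locus of expected codimension $n-1\geq 2$ and hence contains no hypersurface for a generic $f$. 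Therefore $\{H=0\}$ necessarily meets the locus where $f$ is a submersion, and there the vanishing of $\tilde\eta_{[f,\mathcal G]}$ forces $f(\{H=0\})\subset Sing(\mathcal G)$, which is impossible because the fibers of $f$ have codimension two. With that substitution the argument closes; everything else in your write-up is fine.
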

  The proof of this fact can be obtained as in the case treated in \cite{cs}.

Consider the set of foliations $Il_{1}\left(d,2\right)$, $d\geq 2,$ and the following map:
\begin{eqnarray*}
\Phi:BRM\left(n,\nu,1,\gamma\right) \times Il_{1}\left(d,2\right) &\to&\mathbb{F}{\rm{ol}}\left(\Gamma-1,n\right)\\
\left(f,\mathcal G\right) &\to& f ^{\ast}\left(\mathcal G\right)=\Phi\left(f,\mathcal G\right).
\end{eqnarray*}  
The image of $\Phi$ can be written as:  $$\Phi\left(f,\mathcal G\right)=\left[F_{2}\left(A\circ F\right) dF_{0} + F_{2}\left(B\circ F\right) dF_{1} + {\gamma}\left(C\circ F\right) dF_{2}\right].$$ Recall that $\Phi\left(f,\mathcal G\right)=\tilde\eta_{[f,\mathcal G]}$. More precisely, let $PB(\Gamma-1,n,\nu,1,1,\gamma)$ be the closure in $\mathbb{F}{\rm{ol}}\left(\Gamma-1,n\right)$ of the set of foliations $\mathcal{F}$ of the form $f^*\left(\mathcal{G}\right)$, where $f\in BRM\left(n,\nu,1,\gamma\right)$ and $\mathcal G \in Il_{1}(2,d).$ Since $BRM\left(n,\nu,1,\gamma\right)$ and $Il_{1}(2,d)$ are irreducible algebraic sets and the map $\left(f,\mathcal{G}\right) \to f^*\left(\mathcal{G}\right) \in \mathbb{F}{\rm{ol}}\left(\Gamma-1,n\right)$ is an algebraic parametrization of $PB(\Gamma-1,\nu,1,\gamma)$, we have that $PB(\Gamma-1,\nu,1,\gamma)$ is an irreducible algebraic subset of $\mathbb{F}{\rm{ol}}\left(\Gamma-1,n\right)$. Moreover, the set of generic pull-back foliations $\left\{\mathcal{F}; \mathcal{F} = f^*(\mathcal{G}),\ \text{where} \ \left(f,\mathcal{G}\right)\right.$ is a generic pair$\left.\right\}$ is an open (not Zariski) and dense subset of $PB(\Gamma-1,\nu,1,\gamma)$ for $\gamma\geq2  \in \mathbb N$, $\nu\geq2  \in \mathbb N$ and $d \geq 2  \in \mathbb N$. 

\section{Description of generic ramified pull-back foliations on $\mathbb P^n$}
\subsection{The Kupka set}\label{section5.1}

Let $\tau$ be a singularity of $\mathcal{G}$ and $V_{\tau}=\overline{f^{-1}(\tau)}$.  If $(f,\mathcal G)$ is a generic pair then $V_{\tau}\backslash I(f)$ is contained in the Kupka set  of $\mathcal F$. As an example we detail the case where $\tau$ is a singularity over the invariant line, say $\tau=[1:0:0]$. Fix $p\in V_{\tau}\backslash I(f)$. There exist local analytic coordinate systems such that $f(x,y,z)=(x,y^{\gamma})=(u,v)$. Suppose that $\mathcal {G}$ is represented by the 1-form $\omega$; the hypothesis of $\mathcal G$ being of Hyperbolic-type implies that we can suppose $\omega(u,v) =  \lambda_{1}u(1+R(u,v))dv - \lambda_{2}vdu,$ where $\frac{\lambda_{2}}{\lambda_{1}} \in \mathbb{C}\backslash \mathbb{R}$. We obtain $\tilde \omega(x,y)=f^{\ast}(\omega)=(y^{\gamma -1})(\lambda_{1}\gamma x(1+R(x,y^{\gamma})dy-\lambda_{2}ydx)=(y^{\gamma -1})\hat \omega(x,y)$ and so $d\hat \omega(p)\neq0$. Therefore if $p$ is as before it belongs to the Kupka-set of $\mathcal {F}$. For the other points the argumentation is analogous. This is the well known Kupka-Reeb phenomenon, and we say that $p$ is contained in the Kupka-set of $\mathcal {F}$. It is known that this local product structure is stable under small perturbations of $\mathcal F$ for instance, see \cite{kupka},\cite{gln}. 
 
\subsection{Generalized Kupka and quasi-homogeneous singularities}\label{section5.2}
 In this section we will recall the quasi-homogeneous singularities of an  integrable holomorphic $1$-form.  They appear in the indeterminacy set of $f$ and play a central role in great part of the proof of Theorem  B. 
\begin{definition} Let  $\omega$ be an holomorphic  integrable 1-form defined in a neighborhood of $p \in {\mathbb{C}^3}$. We say that $p$ is a Generalized Kupka(GK) singularity of $\omega$ if $\omega(p)=0$ and either 
$d\omega(p)\neq0$ or $p$ is an isolated zero of $d\omega$.   
\end{definition}
 \par
Let $\omega$ be an integrable $1$-form in a neighborhood of $p  \in {\mathbb{C}^3}$  and $\mu$ be a holomorphic $3$-form such that $\mu({p}) \neq 0$. Then $d\omega=i_{\mathcal Z}(\mu)$ where $\mathcal Z$ is a holomorphic vector field. 
\begin{definition} We say that $p$ is a quasi-homogeneous singularity of $\omega$ if  $p$ is an isolated singularity of $\mathcal Z$ and the germ of $\mathcal Z$ at $p$ is nilpotent, that is, if $L=D\mathcal Z(p)$ then all eigenvalues of $L$ are equals to zero.
\end{definition}
 This definition is justified by the following result that can be found in \cite{ln1} or \cite{ccgl}:
\begin{theorem} \label{teo5.3}
Let $p$ be a quasi-homogeneous singularity of an holomorphic integrable 1-form $\omega.$  Then there exists two holomorphic vector fields $S$ and $\mathcal Z$ and a local chart  $U:=(x_{0},x_{1},x_{2})$  around $p$ such that  $x_{0}(p)=x_{1}(p)=x_{2}(p)=0$ and:
\begin{enumerate}
\item[(a)] $\omega$ = $\lambda i_{S}i_{\mathcal Z}(dx_0 \wedge dx_{1} \wedge dx_{2}),$ $\lambda \in \mathbb Q_{+}$ 
$d\omega= i_{\mathcal Z}(dx_{0} \wedge dx_{1} \wedge dx_{2})$ and $\mathcal Z=(rot (\omega))$;
\item[(b)] $S=p_{0}x_{0} \frac{\partial}{\partial x_{0}}+ p_{1}x_{1} \frac{\partial}{\partial x_{1}}+p_{2}x_{2} \frac{\partial}{\partial x_{2}} $, where, $p_{0}$, $p_{1}$, $p_{2}$ are positive integers with $g.c.d (p_{0},p_{1},p_{2})=1$;
\item[(c)]  $p$ is an isolated singularity for $\mathcal Z$, $\mathcal Z$ is polynomial in the chart\break $U:=(x_{0},x_{1},x_{2})$ and  $[S,\mathcal Z]=\ell \mathcal Z$, where $\ell \geq1$.
\end{enumerate}
\end{theorem}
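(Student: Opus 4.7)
The plan is to unpack the definition of quasi-homogeneity (that the rotational $\mathcal Z$ defined by $d\omega = i_{\mathcal Z}(\mu)$ has nilpotent linear part and isolated zero at $p$) and upgrade $\mathcal Z$, via a classification result, to a polynomial quasi-homogeneous vector field in carefully chosen coordinates; the normal form for $\omega$ then drops out by a direct Cartan-calculus computation.

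First I would extract the two fundamental consequences of integrability. Taking a local volume form $\mu$ with $\mu(p)\neq 0$, the equation $d\omega = i_{\mathcal Z}(\mu)$ defines $\mathcal Z=\mathrm{rot}(\omega)$ uniquely. Contracting $\omega\wedge d\omega = 0$ with $\mathcal Z$ and using $\omega\wedge\mu = 0$ (a $4$-form in dimension $3$) gives $i_{\mathcal Z}(\omega)\,\mu = 0$, hence $i_{\mathcal Z}(\omega)=0$. Applying $d$ to $d\omega=i_{\mathcal Z}(\mu)$, together with $d\mu=0$ and Cartan's formula, yields $L_{\mathcal Z}(\mu)=0$, so $\mathcal Z$ is divergence-free. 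Thus $\omega$ annihilates $\mathcal Z$ and $\mathcal Z$ is a volume-preserving nilpotent germ with an isolated zero at $p$.

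The central step is to invoke the classification of isolated divergence-free nilpotent holomorphic germs in $(\mathbb C^3,0)$, which is the key content imported from \cite{ln1,ccgl}: there exist analytic coordinates $(x_0,x_1,x_2)$ centered at $p$, positive integers $p_0,p_1,p_2$ with $\gcd(p_0,p_1,p_2)=1$, and an integer $\ell\geq 1$ such that $\mathcal Z$ is polynomial and quasi-homogeneous of weighted degree $\ell$ with respect to the weights $(p_0,p_1,p_2)$. Setting $S=\sum_{j=0}^{2} p_j x_j \,\partial/\partial x_j$, quasi-homogeneity of $\mathcal Z$ is equivalent to $[S,\mathcal Z]=\ell\,\mathcal Z$, which establishes (b) and (c). Positivity of the weights is forced by the isolated-zero condition: a non-positive weight would allow a non-isolated fixed locus for the flow of $S$ tangent to $\mathcal Z$.

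Finally, to obtain (a), set $\tilde\omega:=i_S i_{\mathcal Z}(dx_0\wedge dx_1\wedge dx_2)$ and write $\mu=dx_0\wedge dx_1\wedge dx_2$, $|p|=p_0+p_1+p_2$. By Cartan's formula $d\tilde\omega = L_S(i_{\mathcal Z}\mu) - i_S(d\,i_{\mathcal Z}\mu)$; the second term vanishes since $d\,i_{\mathcal Z}\mu=L_{\mathcal Z}\mu=0$, while $L_S(i_{\mathcal Z}\mu)=i_{[S,\mathcal Z]}\mu + i_{\mathcal Z}(L_S\mu)=\ell\,i_{\mathcal Z}\mu + |p|\,i_{\mathcal Z}\mu=(\ell+|p|)\,d\omega$. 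Both $\omega$ and $\lambda\tilde\omega$ with $\lambda=1/(\ell+|p|)\in\mathbb Q_+$ therefore have the same exterior derivative and both annihilate $\mathcal Z$; since $\mathcal Z$ has an isolated zero, any $1$-form in $\ker(i_{\mathcal Z})$ with prescribed differential is unique, forcing $\omega=\lambda\,\tilde\omega$.

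The principal obstacle is unquestionably the classification step producing the integer weights and the polynomial quasi-homogeneous model of $\mathcal Z$. Nilpotency alone delivers only formal normal forms with resonance ambiguities; it is the joint effect of the divergence-free constraint and the isolated-zero hypothesis in ambient dimension three that rigidifies the germ to a finite-dimensional quasi-homogeneous model. The detailed argument, drawn from \cite{ccgl}, constitutes the real substance of the theorem; the rest of the proof, as sketched above, is essentially formal Cartan calculus.
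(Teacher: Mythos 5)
The paper itself gives no proof of Theorem \ref{teo5.3}: it is quoted verbatim from the references (\cite{ln1}, \cite{ccgl}), so your reconstruction has to stand on its own. Your preliminary computations are correct and standard: contracting $\omega\wedge d\omega=0$ with $\mathcal Z$ does give $i_{\mathcal Z}\omega=0$, $d(i_{\mathcal Z}\mu)=0$ does give $L_{\mathcal Z}\mu=0$, and the Cartan computation $d\bigl(i_{S}i_{\mathcal Z}\mu\bigr)=(\ell+|p|)\,i_{\mathcal Z}\mu$ is right, as is the equivalence between $[S,\mathcal Z]=\ell\mathcal Z$ and quasi-homogeneity of $\mathcal Z$.

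The genuine gap is your final uniqueness claim: ``since $\mathcal Z$ has an isolated zero, any $1$-form in $\ker(i_{\mathcal Z})$ with prescribed differential is unique.'' This is false at the stated level of generality. A closed $1$-form $\alpha$ with $i_{\mathcal Z}\alpha=0$ is locally $dg$ with $g$ a holomorphic first integral of $\mathcal Z$, and a germ with an isolated zero can perfectly well have nonconstant first integrals: take $\mathcal Z=x_{0}\frac{\partial}{\partial x_{0}}+x_{1}\frac{\partial}{\partial x_{1}}-2x_{2}\frac{\partial}{\partial x_{2}}$, which is divergence-free with an isolated singularity, and $g=x_{0}^{2}x_{2}$, so that $\alpha=dg\neq0$ is closed and annihilates $\mathcal Z$; then $\omega$ and $\omega+dg$ have the same differential and the same kernel condition. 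Only nilpotency (which you never invoke at this step) can exclude such $g$, and proving that your nilpotent quasi-homogeneous $\mathcal Z$ admits no nonconstant first integral is not a one-liner — it needs, say, the division argument that since $(\mathcal Z_{0},\mathcal Z_{1},\mathcal Z_{2})$ is a regular sequence, any relation $i_{\mathcal Z}(dg)=0$ forces $dg=i_{W}i_{\mathcal Z}\mu$ for some field $W$, followed by weight bookkeeping. A second, related inaccuracy: you import from \cite{ccgl} a classification of abstract nilpotent divergence-free germs with isolated zero, but what those references actually prove is a normal form for the pair $(\omega,\mathcal Z=\mathrm{rot}(\omega))$ with $\omega$ integrable; integrability is used well beyond your first step. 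In the cited argument $S$ and identity (a) are produced together: the division theorem gives $\omega=i_{W}i_{\mathcal Z}\mu$, then $d\omega=i_{\mathcal Z}\mu$ forces $[W,\mathcal Z]=(1-\mathrm{div}\,W)\mathcal Z$, and normal-form theory applied to $W$ yields $S$, the weights, and $\lambda$ simultaneously — rather than classifying $\mathcal Z$ in isolation and recovering $\omega$ a posteriori by a uniqueness principle, which is exactly the step where your proposal breaks.
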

 \begin{definition}
 Let $p$ be a quasi-homogeneous singularity of $\omega.$ We say that it is of the type $(p_{0}:p_{1}:p_{2};\ell)$, if  for some local chart and vector fields $S$ and $\mathcal Z$  the properties $(a),(b)$ and $(c)$ of the {\textrm{Theorem}} \ref{teo5.3} are satisfied.
 \end{definition}

 We can now state the stability result, whose proof can be found in\break \cite{ccgl}:
 \begin{proposition}\label{prop5.5} Let $(\omega_{s})_{s \in \Sigma}$ be a holomorphic family of integrable $1$-forms defined in a neighborhood of a compact ball  $B= \{ {z \in {\mathbb{C}^3} ; |z|} \leq \rho \}$, where $\Sigma$ is a neighborhood of $0 \in {\mathbb{C}^k}.$  Suppose that all singularities of $\omega_{0}$ in $B$ are $GK$ and that $sing(d\omega_{0})\subset int(B)$. Then there exists $\epsilon >0$ such that if $s \in B(0,\epsilon)\subset\Sigma,$ then all singularities of $\omega_{s}$ in $B$ are GK. Moreover, if $0 \in B$ is a quasi-homogeneous singularity of type $(p_{0}:p_{1}:p_{2};\ell)$ then there exists a holomorphic map $B(0,\epsilon) \ni s \mapsto z(s)$, such that $z(0)=0$ and $z(s)$ is a $GK$ singularity of  $\omega_{s}$ of the same type (quasi-homogeneous of the type $(p_{0}:p_{1}:p_{2};\ell)$, according to the case).
 \end{proposition}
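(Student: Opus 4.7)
The plan is to reduce to a local analysis near each singularity of $\omega_0$ in $B$. Since $\omega_0$ does not vanish on the compact set $\partial B$, continuity of the family gives a neighborhood of $\partial B$ on which $\omega_s$ has no zeros for small $s$, confining $sing(\omega_s) \cap B$ to the interior. The set $sing(\omega_0) \cap B$ decomposes into (a) the ordinary Kupka locus where $d\omega_0 \neq 0$, which by Kupka's theorem is traced out by smooth codimension-two curves, and (b) the finite set $\Sigma = sing(d\omega_0) \cap B$, whose points are the isolated GK singularities. Around each $p_i \in \Sigma$ I would choose pairwise disjoint closed polydiscs $\overline{D_i} \subset int(B)$; outside $\bigcup D_i$ we still have $d\omega_0 \neq 0$.

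For the Kupka part, continuity gives $d\omega_s \neq 0$ on $B \setminus \bigcup D_i$ for small enough $s$, so every zero of $\omega_s$ in this region is automatically a GK singularity of the first kind. For each polydisc $D_i$, the rotational vector field $\mathcal Z_0 = rot(\omega_0)$ has $p_i$ as an isolated zero of some finite multiplicity $m_i$. The family $s \mapsto \mathcal Z_s$ is holomorphic into the space of vector fields on $\overline{D_i}$, so by the holomorphic persistence of isolated zeros (a multidimensional Rouch\'e argument on $\mathcal Z_s : \overline{D_i} \to \mathbb{C}^3$) there is $\epsilon_i > 0$ such that for $|s| < \epsilon_i$ the set $\mathcal Z_s^{-1}(0) \cap \overline{D_i}$ consists of finitely many isolated points of total multiplicity $m_i$. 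Integrability $\omega_s \wedge d\omega_s = 0$, together with $\omega_s(p_i) \to 0$, forces any zero of $d\omega_s$ sufficiently close to $p_i$ to be a zero of $\omega_s$ as well, so each is an isolated GK singularity of $\omega_s$. Combined with the Kupka step, this proves the first assertion with $\epsilon = \min_i \epsilon_i$.

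The second assertion, the preservation of the quasi-homogeneous type $(p_0:p_1:p_2;\ell)$, is the main obstacle: a generic holomorphic deformation of a nilpotent singularity of a vector field becomes semisimple, so persistence of nilpotency is not automatic and must exploit the integrability of the whole family $\omega_s$, not merely the holomorphy of $\mathcal Z_s$. Following \cite{ccgl}, the approach is to construct, via the implicit function theorem and the holomorphic persistence above, a family $z(s)$ of isolated GK singularities with $z(0) = 0$, together with holomorphic families of vector fields $S_s$ and $\mathcal Z_s$ and a local biholomorphism $\varphi_s$ with $\varphi_s(z(s)) = 0$, in which $S_s$ takes the form $\sum_{i=0}^{2} p_i x_i \partial_{x_i}$ and $[S_s, \mathcal Z_s] = \ell\, \mathcal Z_s$. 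This reduces to solving a cohomological equation in the parameter $s$; the essential input is that, for integrable $1$-forms, the weighted initial ideal of $\mathcal Z_s$ at $z(s)$ is rigid in the sense of Mather--Yau, so the weights $(p_0, p_1, p_2)$ and the bracket degree $\ell$ remain locally constant along the family. This rigidity, together with the uniqueness part of the normal form in Theorem \ref{teo5.3}, yields the claimed invariance of the type and concludes the proof.
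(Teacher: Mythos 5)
First, note that the paper does not actually prove Proposition \ref{prop5.5}: it is stated with the remark that the proof ``can be found in \cite{ccgl}'', so the only meaningful comparison is with that reference. Your first paragraph reproduces the standard persistence argument correctly: zeros of $\omega_s$ near $\partial B$ and on $B\setminus\bigcup_i D_i$ are handled by compactness and $d\omega_s\neq 0$, and inside each $D_i$ the zeros of $\mathcal Z_s=rot(\omega_s)$ persist as a finite set by conservation of multiplicity, so every zero of $\omega_s$ at which $d\omega_s$ vanishes is automatically an \emph{isolated} zero of $d\omega_s$, hence GK. (Minor point: the implication you actually need is that $sing(d\omega_s)\cap D_i=sing(\mathcal Z_s)\cap D_i$ is finite; the statement you invoke instead --- that integrability forces zeros of $d\omega_s$ close to $p_i$ to be zeros of $\omega_s$ --- is a consequence of the de Rham--Saito division lemma and is true, but it is not what makes those singularities GK.)

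The genuine gap is in the second assertion. You correctly identify that nilpotency of $D\mathcal Z_s(z(s))$ is not preserved by arbitrary holomorphic deformations of $\mathcal Z_0$ and that integrability must enter, but the step you offer in its place --- that ``the weighted initial ideal of $\mathcal Z_s$ is rigid in the sense of Mather--Yau'' --- is not a citable statement and does not apply here: Mather--Yau concerns isolated hypersurface singularities being determined by their Tjurina algebras, and says nothing about persistence of the weights $(p_0,p_1,p_2)$ or of the relation $[S,\mathcal Z]=\ell\mathcal Z$ along a family of integrable $1$-forms. Two things therefore remain unproved exactly where the proposition is nontrivial: (i) that the multiple isolated zero of $\mathcal Z_0$ at $0$ does not split into several zeros of $\mathcal Z_s$ and admits a holomorphic selection $z(s)$ (your Rouch\'e argument only conserves total multiplicity in $D_i$); and (ii) that $\omega_s$ near $z(s)$ again admits a pair $(S_s,\mathcal Z_s)$ as in Theorem \ref{teo5.3} with the same type. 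In \cite{ccgl} both points are extracted from the structure and finite-determinacy theorems for quasi-homogeneous integrable $1$-forms (the same source as Theorem \ref{teo5.3}), after which the type is locally constant because it is discrete data depending continuously on $s$. As written, your treatment of the second assertion is a sketch that defers to \cite{ccgl} plus an unjustified rigidity claim, so it cannot stand on its own.
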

 
 Let us describe  $\mathcal{F} = f^*(\mathcal{G})$ in a neighborhood of a point $p \in I(f).$
\noindent It is easy to show that there exists a local chart $(U,(x_0,x_1,x_2,y)\in \mathbb C^3\times \mathbb C^{n-2})$  around $p$ such that the lifting $\tilde f$ of $f$ is of the form $\tilde f|_{U}=(x_{0},x_{1},x^{\gamma} _{2}):U \to {\mathbb{C}^3}$. In particular $\mathcal {F}|_{U(p)}$ is represented by the $1$-form 
\begin{equation}\label{eta}
\eta (x_{0},x_{1},x_{2},y) = x_{2}.A(x_{0},x_{1},x^{\gamma} _{2})dx_{0} 
+ x_{2}.B(x_{0},x_{1},x^{\gamma} _{2})dx_{1}\end{equation}\begin{equation*} + \gamma C(x_{0},x_{1},x^{\gamma} _{2})dx_{2}. 
\end{equation*}

Let us now obtain the vector field $S$ as in Theorem \ref{teo5.3}. Consider the radial vector field $R=X \frac{\partial}{\partial X}+ Y \frac{\partial}{\partial Y}+ Z \frac{\partial}{\partial Z}$. Note that in the coordinate system above it transforms into
$$ x_{0}  \frac{\partial}{\partial x_{0}}+x_{1}  \frac{\partial}{\partial x_{1}}+\frac{1}{\gamma}x_{2}  \frac{\partial}{\partial x_{2}}.$$ Since the eigenvalues of $S$ have to be integers, after a multiplication by ${\gamma}$ we obtain $$S={\gamma} x_{0} \frac{\partial}{\partial x_{0}}+ {\gamma}x_{1} \frac{\partial}{\partial x_{1}}+x_{2} \frac{\partial}{\partial x_{2}}.$$
 Let us concentrate in the case $n=3$.

\begin{lemma}\label{liecartan} If $\eta$ and $S$ are as above
then we have  $L_{S}\eta=[1+\gamma(1+d)]\eta$.
\end{lemma}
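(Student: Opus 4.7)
The plan is a direct Cartan/Euler computation, using that $A$ and $B$ are homogeneous of degree $d$ in $(X,Y,Z)$ while $C$ is homogeneous of degree $d+1$ (since the $1$-form $\Omega = ZA\,dX+ZB\,dY+C\,dZ$ representing a degree-$d$ foliation must have coefficients of degree $d+1$). I would not appeal to any sophisticated machinery; the statement is really a weighted Euler identity.

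First I record how $S$ acts on functions of the form $P(x_0,x_1,x_2^{\gamma})$ with $P$ homogeneous of degree $k$. By the chain rule,
\begin{equation*}
S\bigl(P(x_0,x_1,x_2^{\gamma})\bigr)
=\gamma x_0 P_X+\gamma x_1 P_Y+x_2\cdot\gamma x_2^{\gamma-1}P_Z
=\gamma\bigl(x_0 P_X+x_1 P_Y+x_2^{\gamma}P_Z\bigr)=\gamma k\, P,
\end{equation*}
by Euler's identity applied to $P$ at the point $(x_0,x_1,x_2^{\gamma})$. Hence $S\cdot A=\gamma d\, A$, $S\cdot B=\gamma d\, B$, and $S\cdot C=\gamma(d+1)\,C$, where $A,B,C$ are shorthand for the pulled-back functions $A(x_0,x_1,x_2^{\gamma})$, etc.

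Next I compute $L_S\eta$ term by term, using $L_S(f\,dx_i)=S(f)\,dx_i+f\,d(S(x_i))$ and the values $S(x_0)=\gamma x_0$, $S(x_1)=\gamma x_1$, $S(x_2)=x_2$. For the first summand:
\begin{equation*}
L_S(x_2 A\,dx_0)=\bigl(S(x_2)A+x_2 S(A)\bigr)dx_0+x_2 A\,d(\gamma x_0)
=\bigl(1+\gamma d+\gamma\bigr)x_2 A\,dx_0.
\end{equation*}
Exactly the same computation yields $L_S(x_2 B\,dx_1)=(1+\gamma(d+1))\,x_2 B\,dx_1$. For the third summand,
\begin{equation*}
L_S(\gamma C\,dx_2)=\gamma S(C)\,dx_2+\gamma C\,d(x_2)
=\bigl(\gamma^2(d+1)+\gamma\bigr)C\,dx_2
=\bigl(1+\gamma(d+1)\bigr)\gamma C\,dx_2.
\end{equation*}
Adding the three contributions gives $L_S\eta=[1+\gamma(1+d)]\,\eta$, as claimed.

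There is no real obstacle here: the only thing to be careful about is keeping track of the two distinct homogeneity weights (the $\gamma$ appearing because $x_2$ enters $\tilde f$ with multiplicity $\gamma$, and the $1$ coming from $d(S(x_0))$ and $d(S(x_2))$ balancing the factor of $x_2$ in the first two coefficients and the factor $\gamma$ in the third). One could equally present the proof via Cartan's magic formula $L_S=i_S\,d+d\,i_S$, but the direct computation above seems the cleanest and makes the role of the Euler relation transparent.
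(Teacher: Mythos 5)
Your computation is correct, and it is essentially the argument the paper intends: the paper's proof consists of one line invoking Cartan's formula $L_S\eta=i_Sd\eta+d(i_S\eta)$ and leaves all details to the reader, while you carry out the same direct check via the Leibniz rule, correctly identifying the homogeneity degrees ($\deg A=\deg B=d$, $\deg C=d+1$) and the weighted Euler identity that make each of the three terms scale by $1+\gamma(d+1)$. The two routes are interchangeable here (indeed $i_S\eta=0$, so Cartan's formula reduces to the same term-by-term computation), so there is nothing further to reconcile.
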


\begin{proof} We just have to use Cartan's formula for the Lie's derivative, $L_{S}\eta=i_Sd\eta+d(i_S\eta)$. The details are left for the reader.
\end{proof}
\begin{lemma} \label{lema5.6}If $p\in I(f)$ then $p$ is a quasi-homogeneous singularity of $\eta$.    \end{lemma}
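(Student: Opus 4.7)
My approach is to verify directly the two conditions of the definition of a quasi-homogeneous singularity for $\eta$ at the origin of the local chart around $p$. Writing $d\eta = i_{\mathcal{Z}}(\mu)$ with $\mu = dx_0\wedge dx_1\wedge dx_2$, a direct computation of $d\eta$ from \eqref{eta}, combined with the Euler identity $XA + YB + C \equiv 0$ (so $C_X = -A - XA_X - YB_X$, and similarly for $C_Y, C_Z$) and the homogeneity of $A,B$ in degree $d$ and $C$ in degree $d+1$, reduces after substitution of the Euler relations for $A$ and $B$ to the compact expression
\begin{equation*}
\mathcal{Z} = (-\Lambda B + \gamma x_0 T)\,\partial_{x_0} + (\Lambda A + \gamma x_1 T)\,\partial_{x_1} + x_2 T\,\partial_{x_2},
\end{equation*}
where $\Lambda = 1+\gamma(d+1)$, $T = B_X - A_Y$, and $A, B, T$ are evaluated at $(x_0, x_1, x_2^{\gamma})$. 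Since $A$ and $B$ are homogeneous of positive degree, we immediately obtain $\mathcal{Z}(0)=0$.

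The central step is to establish that $L := D\mathcal{Z}(0)$ is nilpotent. I would apply $L_S$ to the identity $d\eta = i_{\mathcal{Z}}(\mu)$: Lemma \ref{liecartan} yields $L_S(d\eta) = \Lambda\,d\eta$, while the standard identity $L_S\,i_{\mathcal{Z}}(\mu) = i_{[S,\mathcal{Z}]}(\mu) + i_{\mathcal{Z}}(L_S\mu)$ together with $L_S\mu = (2\gamma+1)\mu$ forces the commutator relation $[S,\mathcal{Z}] = \gamma(d-1)\,\mathcal{Z}$. Because $S$ and $\mathcal{Z}$ both vanish at $0$, linearizing this Lie bracket identity produces the matrix equation $\operatorname{ad}_{L_S}(L) = -\gamma(d-1)\,L$ with $L_S = \operatorname{diag}(\gamma,\gamma,1)$. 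The operator $\operatorname{ad}_{L_S}$ on $3\times 3$ matrices has spectrum $\{0,\,\gamma-1,\,-(\gamma-1)\}$; for $\gamma,d \geq 2$ one has $-\gamma(d-1) \leq -\gamma < -(\gamma-1)$, so $-\gamma(d-1)$ is not among these eigenvalues, forcing $L = 0$. A fortiori $L$ is nilpotent.

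The remaining task, which is the main obstacle, is to show that $0$ is an isolated zero of $\mathcal{Z}$. Inspection of the formulas for $Z_0, Z_1, Z_2$ shows that a nonzero zero with $x_2 = 0$ would satisfy $x_0 Z_1 - x_1 Z_0 = \Lambda(x_0 A + x_1 B) = 0$, so that (via Euler) $C(x_0, x_1, 0) = 0$ and hence $[x_0:x_1:0]$ is a singularity of $\mathcal{G}$ on the invariant line $\ell$; while a nonzero zero with $x_2 \neq 0$ forces $T = 0$ together with $A = B = 0$ at $(x_0, x_1, x_2^{\gamma})$, so that $\tilde f(x_0, x_1, x_2)$ lies on the lifted cone of a singularity of $\mathcal{G}$ off $\ell$. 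Both possibilities must be excluded locally near $p$, and I would do so by invoking the generic-position hypothesis of Section \ref{section4}: the condition $\operatorname{Sing}(\mathcal{G})\cap Y_2(f) = \emptyset$, together with $\mathcal{G}\in \tilde M(d)$ (all singularities hyperbolic and $\ell$ the only algebraic invariant curve), forces the zero scheme $\{Z_0 = Z_1 = Z_2 = 0\}$ to be $0$-dimensional at $p$. Combined with $\mathcal{Z}(0) = 0$ and $L = 0$, this shows that $p$ is a quasi-homogeneous singularity of $\eta$.
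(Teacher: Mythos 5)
Your computation of $\mathcal Z$ from $d\eta=i_{\mathcal Z}(dx_0\wedge dx_1\wedge dx_2)$ is correct, and your nilpotency argument takes a genuinely different route from the paper's: the paper simply observes by inspection that $D\mathcal Z(0)$ is the null matrix (which indeed follows from your explicit formula, since every component of $\mathcal Z$ has vanishing linear part once $d\geq 2$), whereas you derive $[S,\mathcal Z]=\gamma(d-1)\mathcal Z$ from Lemma \ref{liecartan} and force $D\mathcal Z(0)=0$ by comparing $-\gamma(d-1)$ with the spectrum $\{0,\pm(\gamma-1)\}$ of $\operatorname{ad}_{\operatorname{diag}(\gamma,\gamma,1)}$. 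That weight argument is clean, gives a structural reason for nilpotency (it is essentially item (c) of Theorem \ref{teo5.3} read backwards), and is a legitimate substitute for the coefficient check.

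The gap is in the isolated-zero step. You correctly locate where a nonzero zero of $\mathcal Z$ could sit: with $x_2=0$ it must lie on $\{C(x_0,x_1,0)=0\}$, i.e.\ on the local fibres over $\operatorname{Sing}(\mathcal G)\cap\ell$, and with $x_2\neq 0$ it must lie on $\{A=B=0\}$, i.e.\ on the fibres over the singularities off $\ell$. But these loci cannot be ``excluded locally near $p$'': every fibre $V_\tau$, $\tau\in \operatorname{Sing}(\mathcal G)$, contains $I(f)$, so all of these curves genuinely pass through $p$ and consist of singular points of $\eta$. The hypothesis $\operatorname{Sing}(\mathcal G)\cap Y_{2}(f)=\emptyset$ does not remove them; it only guarantees that $\operatorname{Sing}(\eta)$ near $p$ is exactly the union of these fibres. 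What has to be proved is that along each such curve, away from the origin, the remaining components of $\mathcal Z$ do not vanish --- e.g.\ that $T\neq 0$ on $\{A=B=0,\ x_2\neq0\}$, and that $(\mathcal Z_0,\mathcal Z_1)\neq(0,0)$ on the lines in $\{x_2=0\}$ --- and this is precisely the Kupka property $d\eta\neq0$ established in Section \ref{section5.1} from the hyperbolicity of the singularities of $\mathcal G$. That is the step the paper invokes (all singular curves of $\mathcal F$ near $0$ are of Kupka type, hence $0$ is an isolated singularity of $d\eta$), and the one your writeup replaces with an appeal to $0$-dimensionality of the zero scheme, which is exactly the claim to be proved. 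Once you add the observation that your two case analyses give $\{\mathcal Z=0\}\subset\operatorname{Sing}(\eta)$, and combine it with the Kupka non-degeneracy on $\operatorname{Sing}(\eta)\setminus\{0\}$, the argument closes and agrees with the paper's.
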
 
  \begin{proof}
First of all note that $i_{S}\eta=0$. From the computations  obtained in lemma \ref{liecartan}, we have that $L_{S}\eta=m\eta,$ where $m=[1+\gamma(1+d)]$. This implies that the singular set of $\eta$ is invariant under the flow of $S$.  
The vector field $\mathcal Z$ such that $\eta=i_{S}i_{\mathcal Z}(dx_0 \wedge dx_{1} \wedge dx_{2})$ is given by
$$\mathcal Z=\mathcal Z_{0}(x_{0},x_{1},x_{2}) \frac{\partial}{\partial x_{0}}+ \mathcal Z_{1}(x_{0},x_{1},x_{2}) \frac{\partial}{\partial x_{1}}+ \mathcal Z_{2}(x_{0},x_{1},x_{2}) \frac{\partial}{\partial x_{2}}$$
where for $i=0,1$ we have $\mathcal Z_{i}(x_{0},x_{1},x_{2})= \tilde{A}_{i}(x_{0},x_{1},x^{\gamma} _{2})$  and $\mathcal Z_{2}(x_{0},x_{1},x_{2})= x_{2}.\tilde{A}_{2}(x_{0},x_{1},x^{\gamma} _{2})$ moreover for $i=0,1$ the polynomials $\tilde A_{i}(0,0,0)=0$ and $\tilde A_{2}(0,0,0)=0$.  
We observe that these polynomials are not unique. On the other hand, they have to satisfy the following relations:
\begin{equation*}\label{polinômios 1}
A(x_{0},x_{1},x^{\gamma} _{2})=\gamma x_1\tilde{A}_{2}(x_{0},x_{1},x^{\gamma} _{2})-\tilde A_{1}(x_{0},x_{1},x^{\gamma} _{2})
\end{equation*}
\begin{equation*}\label{polinômios 2}
B(x_{0},x_{1},x^{\gamma} _{2})=\tilde{A}_{0}(x_{0},x_{1},x^{\gamma} _{2})-\gamma x_{0}\tilde A_{2}(x_{0},x_{1},x^{\gamma} _{2})
\end{equation*}
\begin{equation*}\label{polinômios 3}
C(x_{0},x_{1},x^{\gamma} _{2})= x_{0}\tilde{A}_{1}(x_{0},x_{1},x^{\gamma} _{2})-x_{1}\tilde A_{1}(x_{0},x_{1},x^{\gamma} _{2})
\end{equation*}

We must show that the origin is an isolated singularity of $\mathcal Z$ and all eigenvalues of $D\mathcal Z(0)$ are $0$. By straightforward computation we find that the Jacobian matrix $D\mathcal Z(0)$ is the null matrix, hence all its eigenvalues are null. Since all singular curves of $\mathcal F$ in a neighborhood $(U,(x_0,x_1,x_2))$ of $0$ are of Kupka type, as proved in Section \ref{section5.1}, it follows that the origin is an isolated sigularity of $\mathcal Z$. Note that the unique singularities of $\eta$ in the neighborhood $(U,(x_0,x_1,x_2))$ of $0$ come from $\tilde f^\ast Sing(\mathcal G)$; this follows from the fact that  $Sing\left(\mathcal G\right) \cap (VC(f) \backslash \ell)= \emptyset$.
  On the other hand we have seen that $(f)^{-1}(sing(\mathcal G))\backslash I(f)$ is contained in the Kupka set of $\mathcal{F}$. Hence the point $p$  is an isolated singularity of $d\eta$ and thus an isolated singularity of $\mathcal Z$.
 \end{proof} 
 
 As a consequence, in the case $n=3$ any $p\in I(f)$ is a quasi-homogeneous singularity of type $\left [{\gamma}:{\gamma}:1\right ]$. In the case $n\geq4$ the argument is analogous. Moreover, in this case there will be a local structure product near any point $p\in I(f)$. In fact in the case $n\geq4$ we have:
\begin{cor} Let $(f,\mathcal G)$ be a generic pair. Let $p \in I(f)$ and $\eta$ an 1-form defining $\mathcal F$ in a neighborhood of $p$. Then there exists a 3-plane $\Pi \subset  \mathbb C^{n}$ such that $d(\eta)|_{\Pi}$ has an isolated singularity at $0 \in{\Pi}. $
\end{cor}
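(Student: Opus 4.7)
The plan is to reduce the corollary to the $n=3$ case already settled in Lemma \ref{lema5.6}, by exploiting the local product structure that a generic indeterminacy point provides. The 3-plane $\Pi$ will be the one transverse to $I(f)$ in an adapted chart.

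First I would use the genericity of $f$ to build coordinates at $p$. Since $f\in Gen(n,\nu,1,\gamma)$, Definition \ref{generic} gives $dF_0(p)\wedge dF_1(p)\wedge dF_2(p)\neq 0$. By the holomorphic implicit function theorem, I can complete $(F_0,F_1,F_2)$ to a local chart $(x_0,x_1,x_2,y_1,\ldots,y_{n-3})$ on a neighborhood $U$ of $p$, with $F_j=x_j$ for $j=0,1,2$. In these coordinates the lifting of $f$ becomes $\tilde f|_U(x,y)=(x_0,x_1,x_2^{\gamma})$, which is independent of the transversal coordinates $y$.

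Next I would read off the pull-back. Because $\tilde f$ has no $y$-dependence, the 1-form $\eta=f^{\ast}\mathcal G$ is exactly the expression (\ref{eta}); in particular $\eta$ contains neither $dy_i$ nor any $y$-dependence. Set $\Pi:=\{y_1=\cdots=y_{n-3}=0\}$, a 3-plane through $0$. Then $\eta|_\Pi$ is literally the local pull-back 1-form analyzed in Section \ref{section5.2} for $n=3$, and since $\eta$ does not involve $y$, restriction commutes with exterior differentiation: $d(\eta)|_\Pi=d(\eta|_\Pi)$.

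Finally I would apply Lemma \ref{lema5.6} to $\eta|_\Pi$. That lemma yields that $0\in\Pi$ is a quasi-homogeneous singularity of $\eta|_\Pi$, so by Theorem \ref{teo5.3}(a) there exists a holomorphic vector field $\mathcal Z$ on $\Pi$ with $d(\eta|_\Pi)=i_{\mathcal Z}(dx_0\wedge dx_1\wedge dx_2)$ and with $0$ an isolated zero of $\mathcal Z$. Combined with the previous step this gives that $d(\eta)|_\Pi$ has an isolated singularity at the origin, which is the conclusion of the corollary. There is essentially no new difficulty here; the single point demanding care is the construction of the adapted chart, but this is nothing more than the transversality of $(F_0,F_1,F_2)$ built into the genericity hypothesis, so the main burden is carried entirely by Lemma \ref{lema5.6}.
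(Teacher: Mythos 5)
Your proof is correct and follows the same route as the paper: the paper's own argument is simply ``immediate from the local product structure,'' and your adapted chart in which $\tilde f|_U=(x_0,x_1,x_2^{\gamma})$ is independent of the transversal coordinates is precisely that product structure, with the conclusion then carried by Lemma \ref{lema5.6} applied to the three-dimensional slice $\Pi=\{y=0\}$. The only cosmetic remark is that the isolatedness of the zero of $\mathcal Z$ comes from the definition of quasi-homogeneous singularity as established in Lemma \ref{lema5.6} rather than from Theorem \ref{teo5.3}(a) itself, but this does not affect the argument.
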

\begin{proof} Immediate from the local product structure.
\end{proof}

\subsection{Deformations of the singular set}
In this section we give some auxiliary lemmas which assist in the proof of Theorem \ref{teob}. We have constructed an open and dense subset $\mathcal W$ inside {$PB(\Gamma-1,\nu,1,1,\gamma)$} containing the generic pull-back foliations.  We will show that for any foliation $\mathcal{F} \in \mathcal W$ and any germ of a holomorphic family of foliations $(\mathcal{F}_{t})_{t \in (\mathbb{C},0)}$ such that $\mathcal{F}_{0}=\mathcal{F}$ we have $\mathcal{F}_{t} \in PB(\Gamma-1,\nu,1,1,\gamma)$ for all $t \in (\mathbb{C},0).$
\begin{lemma} \label{Lemma5.8} There exists a germ of isotopy of class $C^{\infty}$, $(I(t))_{t \in (\mathbb {C},0)}$ having the following properties: 
   \begin{enumerate}
\item[(i)]  $I(0)=I(f_{0})$ and $I(t)$ is algebraic and smooth of codimension $3$ for all  ${t \in ({\mathbb {C}}, 0)}.$
 \item[(ii)] For all $p \in I(t)$, there exists a neighborhood $U(p,t)=U$ of $p$ such that $\mathcal{F}_{t}$ is equivalent to the product of a regular foliation of codimension $3$ and a singular foliation $\mathcal F_{p,t}$ of codimension one given by the 1-form $\eta_{p,t}$.
 \end{enumerate}
 \begin{remark}
The family of $1$-forms $\eta_{p,t}$, represents the quasi-homogeneous foliation given by the Proposition \ref{prop5.5}. 
  \end{remark}
\end{lemma}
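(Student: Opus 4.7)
The strategy is to apply Proposition \ref{prop5.5} transversally to the smooth submanifold $I(f_0)$ to track the persistent singular locus, then patch the local data into a global isotopy and recover the local product structure via a parametric version of the quasi-homogeneous normal form.

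\textbf{Step 1 (local setup).} Write $\mathcal F_0=f_0^{\ast}(\mathcal G_0)$ with $(f_0,\mathcal G_0)$ a generic pair, so that $I(f_0)$ is a smooth algebraic subset of $\mathbb P^n$ of codimension $3$. Fix $p\in I(f_0)$. As in Section \ref{section5.2}, there exist holomorphic coordinates $(x_0,x_1,x_2,y)\in\mathbb C^3\times\mathbb C^{n-3}$ on a neighborhood $U_p$ of $p$ in which a lift of $f_0$ is $(x_0,x_1,x_2^{\gamma})$ and $\mathcal F_0|_{U_p}$ is represented by a 1-form $\eta_p^0$ of the shape (\ref{eta}), depending only on $x=(x_0,x_1,x_2)$. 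By Lemma \ref{lema5.6}, every point of the slice $\{x=0\}$ is a quasi-homogeneous singularity of $\eta_p^0$ of type $(\gamma:\gamma:1;\ell)$. Let $\eta_p^t$ denote a holomorphic family of 1-forms on $U_p$ representing $\mathcal F_t|_{U_p}$.

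\textbf{Step 2 (transverse persistence).} For each small $y$, the restriction $\omega_{(t,y)}:=\eta_p^t|_{\{y=\mathrm{const}\}}$ is a holomorphic family of integrable 1-forms on a 3-ball parameterized by $(t,y)\in(\mathbb C,0)\times(\mathbb C^{n-3},0)$, and $\omega_{(0,0)}$ has a quasi-homogeneous singularity of type $(\gamma:\gamma:1;\ell)$ at the origin. Proposition \ref{prop5.5} supplies a holomorphic map $(t,y)\mapsto x(t,y)\in\mathbb C^3$ with $x(0,0)=0$ such that $x(t,y)$ is a quasi-homogeneous singularity of $\omega_{(t,y)}$ of the same type. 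The graph $I_p(t):=\{(x(t,y),y)\}$ is then a smooth complex submanifold of $U_p$ of codimension $3$ agreeing with $I(f_0)\cap U_p$ at $t=0$.

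\textbf{Step 3 (gluing and algebraicity).} Cover the compact set $I(f_0)$ by finitely many charts $U_{p_1},\ldots,U_{p_N}$ of this type. On every overlap $U_{p_i}\cap U_{p_j}$ the two candidate submanifolds $I_{p_i}(t)$ and $I_{p_j}(t)$ coincide, because the quasi-homogeneous singularity on each transversal slice is unique by the uniqueness clause of Proposition \ref{prop5.5}. The pieces patch to a smooth complex submanifold $I(t)$ of codimension $3$ in a tubular neighborhood of $I(f_0)$, depending holomorphically (hence $C^{\infty}$) on $t$, with $I(0)=I(f_0)$. Since $I(t)$ is an analytic component of $\mathrm{Sing}(d\eta_t)$ inside the projective variety $\mathbb P^n$ and everything in sight is algebraic, GAGA yields algebraicity of $I(t)$. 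This proves (i).

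\textbf{Step 4 (product structure).} For (ii), in each chart $U_{p_i}$ apply the holomorphic change of coordinates $(x,y)\mapsto(x-x(t,y),y)$ to straighten $I(t)\cap U_{p_i}$ onto $\{x=0\}$. In the new coordinates, $\eta_{p_i}^t$ has singular set $\{x=0\}$ and its transverse slices are quasi-homogeneous of type $(\gamma:\gamma:1;\ell)$. A parametric version of Theorem \ref{teo5.3}---essentially the holomorphic dependence on $(t,y)$ of the Euler field $S$ and the rotational field $\mathcal Z$ subject to $[S,\mathcal Z]=\ell\mathcal Z$---yields a further holomorphic change of coordinates after which the coefficients of $\eta_{p_i}^t$ depend only on $x$. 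Equivalently, $\mathcal F_t|_{U_{p_i}}$ is the pull-back by the submersion $(x,y)\mapsto x$ of the quasi-homogeneous foliation $\mathcal F_{p,t}$ defined by the resulting 1-form $\eta_{p,t}$, which exhibits the claimed product with the regular codimension-$3$ foliation $\{x=\mathrm{const}\}$.

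The principal obstacle is Step 4: one must upgrade the pointwise persistence supplied by Proposition \ref{prop5.5} along each transversal to a simultaneous holomorphic trivialization in the $y$-direction. This is where the quasi-homogeneity is crucial---the $\mathbb C^{\ast}$-action generated by $S$, together with the integrability of $\eta_{p_i}^t$, rigidifies the transverse structure and prevents genuinely $y$-dependent deformations of the normal form, so that $\eta_{p_i}^t$ can be written in adapted coordinates as a form in $x$ alone.
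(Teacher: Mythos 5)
Your argument is essentially the one the paper relies on: the paper's ``proof'' is only the citation to \cite[Lemma 2.3.2, p.~81]{ln}, and that source proceeds exactly as you do --- persistence of the quasi-homogeneous singularity on transversal $3$-slices via the stability result (Proposition \ref{prop5.5}), gluing over a finite cover of the compact set $I(f_0)$, Chow's theorem for algebraicity, and the Generalized Kupka local product structure of \cite{ccgl} for item (ii). The one caveat is that Proposition \ref{prop5.5} as stated has no uniqueness clause, so the coincidence of $I_{p_i}(t)$ and $I_{p_j}(t)$ on overlaps requires the (true, but unstated) fact that the perturbed quasi-homogeneous singularity is the \emph{unique} singularity of $d\omega_s$ near $0$; this is supplied by the proof of the stability result in \cite{ccgl}, not by the statement you invoke, and the same remark applies to the ``parametric normal form'' you defer to in Step~4.
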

 \begin{proof} See \cite[lema 2.3.2, p.81]{ln}.
 \end{proof}

\begin{remark}
In the case $n>3$, the variety $I(t)$ is connected since $I(f_{0})$ is connected. The local product structure in $I(t)$ implies that the transversal type of $\mathcal{F}_{t}$ is constant. In particular, $\mathcal F_{p,t}$, does not depend on $p \in I(t)$. In the case $n=3$, $I(t)=p_{1}(t),..., p_{j}(t),...,p_{\frac{\nu^{3}}{\gamma}}(t)$ and we can not guarantee a priori that $\mathcal F_{p_{i},t}=\mathcal F_{p_{j},t}$, if $i \neq j.$
\end{remark}
The singular set of  $\mathcal{G}_{0}$ can be divided in two subsets $\mathcal S_{W}(\mathcal G_{0})$,  $\mathcal S_{\ell}(\mathcal G_{0})$.  We know that  $\# \mathcal S_{W}(\mathcal G_{0})=d^{2}$, $\# \mathcal S_{\ell}(\mathcal G_{0})=(d+1)$.  Let $\tau \in Sing( \mathcal G_{0})$ and $K(\mathcal F_0)=\cup_{{\tau\in Sing( \mathcal G_{0})}}V_{\tau}\backslash I(f_{0})$ where $V_{\tau}=\overline{f_{0}^{-1}(\tau)}.$ 
As in Lemma \ref{Lemma5.8}, let us consider a representative of the germ $(\mathcal F_{t})_{t}$, defined on a disc $D_{\delta}:=(|t|<\delta).$ 
  
\begin{lemma} 
There exist $\epsilon >0$ and smooth isotopies $\phi_{\tau}:D_{\epsilon}\times V_{\tau} \to \mathbb P^n, \tau \in Sing(\mathcal G_{0})$, such that $V_{\tau}(t)=\phi_{\tau}(\{t\}\times V_{\tau})$ satisfies:
\begin{enumerate}
\item[(a)] $V_{\tau}(t)$ is an algebraic subvariety of codimension two of $\mathbb P^n$ and  $V_{\tau}(0) = V_{\tau}$ for all $ \tau \in Sing(\mathcal G_{0})$ and for all $t \in D_{\epsilon}.$
\item[(b)] $I (t) \subset V_{\tau}(t)$ for all $\tau \in Sing(\mathcal G_{0})$ and for all $t \in D_{\epsilon} $. Moreover, if $\tau \neq \tau'$, and $\tau, \tau' \in Sing(\mathcal G_{0})$,  we have $V_{\tau}(t)  \cap V_{\tau'}(t) = I(t)$ for all $t \in D_{\epsilon}$ and the intersection is transversal.
\item[(c)]   $V_{\tau}(t) \backslash I(t)$ is contained in the Kupka-set of $\mathcal{F}_t$ for all $ \tau \in Sing(\mathcal G_{0})$ and for all $t \in D_{\epsilon}.$ In particular, the transversal type of $\mathcal F_{t}$ is constant along $V_{\tau}(t) \backslash I(t)$. 
\end{enumerate}
\end{lemma}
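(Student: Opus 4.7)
The plan is to build each $\phi_\tau$ locally in two regimes -- away from $I(f_0)$ using the classical stability of Kupka singularities, and near $I(t)$ using the stability of quasi-homogeneous singularities provided by Proposition \ref{prop5.5} -- and then patch. By Section \ref{section5.1}, every point $p \in V_\tau \setminus I(f_0)$ is a Kupka singularity of $\mathcal F_0$ with well-defined transverse type. The Kupka stability theorem (see \cite{kupka,gln}) gives, for a local defining $1$-form $\omega_t$ of $\mathcal F_t$, a smooth codimension-two zero locus tracking $p$, and the implicit function theorem applied to $d\omega_t$ at $p$ ensures uniqueness of this local tracking $p \mapsto p(t)$. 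Hence the local trackings glue canonically into a smooth isotopy $\phi_\tau^K : D_\epsilon \times (V_\tau \setminus I(f_0)) \to \mathbb P^n$ whose image at each $t$ lies in the Kupka set of $\mathcal F_t$.

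Near any $q \in I(f_0) \subset V_\tau$, Lemma \ref{Lemma5.8} and Proposition \ref{prop5.5} provide a smoothly moving point $q(t) \in I(t)$, a local chart $(x_0,x_1,x_2,y)$ around $q(t)$, and a quasi-homogeneous $1$-form $\eta_{q,t}$ of type $(\gamma:\gamma:1)$ as in (\ref{eta}) defining $\mathcal F_t$ up to a regular product factor. The local branches of $V_\tau$ through $q$ correspond to the $\mathcal G$-invariant coordinate components of this model: the plane $(x_2=0)\cap(x_0\tau_1-x_1\tau_0=0)$ when $\tau\in\mathcal S_\ell(\mathcal G_0)$, and the smooth curve $(x_0=\tau_0 x_2^\gamma,\, x_1=\tau_1 x_2^\gamma)$ when $\tau\in\mathcal S_W(\mathcal G_0)$. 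Both persist under the smooth family of quasi-homogeneous normal forms, giving a local isotopy $\phi_\tau^{QH}$ that agrees with $\phi_\tau^K$ on the overlap by uniqueness. A smooth cut-off along $V_\tau$ then patches these pieces into a single smooth isotopy $\phi_\tau$ of $V_\tau$.

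It remains to derive (a)--(c). For (a), $V_\tau(t):=\phi_\tau(\{t\}\times V_\tau)$ is by construction a closed analytic subset of $\mathbb P^n$ of pure codimension two, hence algebraic by Chow's theorem. For (b), the inclusion $I(t)\subset V_\tau(t)$ is built in since $I(f_0)\subset V_\tau$ and $\phi_\tau$ extends the isotopy of $I(f_0)$; the equality $V_\tau(t)\cap V_{\tau'}(t)=I(t)$ for $\tau\neq\tau'$, together with the asserted transversality, is read off from the explicit description of the $V_\tau$'s in the quasi-homogeneous chart and persists for small $t$ by openness. Item (c) is built into $\phi_\tau^K$, with constancy of the transverse type following from connectedness of each Kupka component of $V_\tau(t)\setminus I(t)$. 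The main obstacle is the patching across $I(t)$, particularly when $n=3$ and $I(f_0)$ is zero-dimensional: the quasi-homogeneous models at distinct points of $I(t)$ are a priori independent, and one must exploit that they all come from the single global foliation $\mathcal F_t$ to ensure that the Kupka tracking on $V_\tau(t)\setminus I(t)$ and the quasi-homogeneous deformations at every point of $V_\tau(t)\cap I(t)$ are simultaneously compatible, so that the patched $\phi_\tau$ is globally well-defined.
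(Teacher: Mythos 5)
Your argument is correct and follows essentially the same strategy as the proof the paper relies on (the paper itself only cites \cite[lema 2.3.3, p.83]{ln}): Kupka stability to track $V_{\tau}\backslash I(f_{0})$, the GK/quasi-homogeneous stability of Proposition \ref{prop5.5} to track the local branches through $I(t)$, and Chow's theorem to get algebraicity of the resulting compact analytic sets. The compatibility issue you flag at the end is resolved exactly as you indicate—both local trackings describe the unique local branch of the global analytic set $Sing(\mathcal F_{t})$ near each point, so they agree on overlaps—hence it is not a genuine gap.
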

\begin{proof}  See \cite[lema 2.3.3, p.83]{ln}.
 \end{proof}

\section{Proof of theorem A}
\subsection{End of the proof of Theorem \ref{teob}}\label{subsection5.4} 

We divide the end of the proof of Theorem \ref{teob} in two parts. In the first part we construct a family  of rational maps $f_{t}:{\mathbb{P}^n \DashedArrow[->,densely dashed    ]   \mathbb{P}^2}$, $f_{t} \in Gen(n,\nu,1,\gamma)$, such that $(f_{t})_{t \in D_{\epsilon}}$ is a deformation of $f_{0}$ and the subvarieties $V_\tau$, $\tau \in Sing(\mathcal{G}_0)$, are fibers of $f_t$ for all $t$. In the second part we show that there exists a family of foliations $(\mathcal {G}_{t})_{t\in D_{\epsilon}}, \mathcal {G}_{t} \in  \mathcal A$ (see Section \ref{section4}) such that  $\mathcal{F}_{t}= f_{t}^{ \ast}(\mathcal{G}_{t})$ for all $t\in D_{\epsilon}$. 

\subsubsection{Part 1}\label{part1} Let us define the family of candidates that will be a deformation of the mapping $f_{0}.$   Set $V_{a}=\overline{{f}_{0}^{-1}(a)}$, $V_{b}=\overline{{f}_{0}^{-1}(b)}$, $V_{c}=\overline{{f}_{0}^{-1}(c)}$, where $a=[0:0:1]$, $b=[0:1:0]$ and  $c=[1:0:0]$ and denote by $V_{\tau^\ast}=\overline{{f}_{0}^{-1}(\tau^\ast)}$, where $\tau^\ast \in Sing(\mathcal G_{0})\backslash\{a,b,c\}$. In this coordinate system the points $b$ and $c$ belong to $\ell$.
  \begin{proposition}\label{recupmapas}
Let $(\mathcal{F}_{t})_{t \in D_{\epsilon}}$ be a deformation of $\mathcal F_{0}= f_{0}^*(\mathcal G_{0})$, where $(f_{0}, \mathcal G_{0})$ is a generic pair, with $\mathcal G_{0} \in \mathcal A$, $f_0 \in Gen\left(n,\nu,1,\gamma\right)$ and $deg(f_{0})=\nu\geq 2$. Then there exists a deformation $({f}_{t})_{t \in D_{\epsilon}}$ of $f_{0}$ in $Gen\left(n,\nu,1,\gamma\right)$ such that:
\begin{enumerate}
\item[(i)]$V_{a}(t),V_{b}(t)$ and $V_{c}(t)$ are fibers of $({f}_{t})_{t \in D_{\epsilon'}}$.
\item[(ii)] $I(t)=I(f_{t}), \forall {t \in D_{\epsilon'}}$.
    \end{enumerate}
  \end{proposition}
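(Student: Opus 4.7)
The strategy is to reconstruct the polynomials $F_0^t, F_1^t, F_2^t$ defining $\tilde f_t$ from the deformed fibers $V_a(t), V_b(t), V_c(t)$ via the cohomology of their ideal sheaves. At $t=0$, genericity gives
\[
V_a = \{F_0 = F_1 = 0\},\quad V_b = \{F_0 = F_2 = 0\},\quad V_c = \{F_1 = F_2 = 0\},
\]
each a smooth complete intersection, of bi-degree $(\nu,\nu)$ in the first case and $(\nu,\nu/\gamma)$ in the other two.

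Since being a smooth complete intersection of fixed bi-degree is an open condition on the Hilbert scheme and the preceding lemma produces smooth algebraic isotopies, we may shrink $\epsilon$ so that $V_a(t), V_b(t), V_c(t)$ remain complete intersections of the same bi-degrees for all $t \in D_\epsilon$. The Koszul resolution of $\mathcal I_{V_a(t)}$ twisted by $\mathcal O_{\mathbb P^n}(\nu)$ yields $h^0(\mathbb P^n, \mathcal I_{V_a(t)}(\nu)) = 2$ (for $n \geq 3$, $\nu \geq 2$); by constancy of this dimension and cohomology-and-base-change, the associated pushforward is locally free of rank $2$ over $D_\epsilon$, so we extract a holomorphic basis $(F_0^t, F_1^t)$ of $H^0(\mathcal I_{V_a(t)}(\nu))$ extending $(F_0, F_1)$. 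Analogously, the Koszul resolution of $\mathcal I_{V_b(t)}$ twisted by $\mathcal O_{\mathbb P^n}(\nu/\gamma)$ yields $h^0(\mathbb P^n, \mathcal I_{V_b(t)}(\nu/\gamma)) = 1$ (here one uses $\gamma \geq 2$, so that $\mathcal O(\nu/\gamma - \nu)$ has no sections), producing a holomorphic $F_2^t$ extending $F_2$. We then normalize the basis of $H^0(\mathcal I_{V_a(t)}(\nu))$ so that $F_0^t$ vanishes on $V_b(t)$ and $F_1^t$ on $V_c(t)$: at $t=0$ this is forced uniquely up to scalar, because $F_1$ cannot vanish identically on $V_b$ (otherwise $V_b \cap \{F_1=0\}$ would have codimension $2$, contradicting the codimension-$3$ genericity of $I(f_0) = V_a \cap V_b \cap V_c$), and uniqueness persists by openness. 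Setting $\tilde f_t := (F_0^t, F_1^t, (F_2^t)^\gamma)$, openness of the genericity condition guarantees $f_t \in Gen(n,\nu,1,\gamma)$; by construction $V_a(t)$ and $V_b(t)$ are fibers of $f_t$ over $a$ and $b$, and $I(f_t) = V_a(t) \cap V_b(t) \cap V_c(t) = I(t)$, giving (ii).

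The main obstacle is the compatibility condition that $V_c(t)$ is also a fiber of $f_t$, equivalently that the single polynomial $F_2^t$ vanishes on both $V_b(t)$ and $V_c(t)$. This reduces to showing $h^0(\mathbb P^n, \mathcal I_{V_b(t) \cup V_c(t)}(\nu/\gamma)) = 1$ for all $t$ near $0$; semicontinuity only gives the inequality $\leq 1$, so to rule out a jump to $0$ one must invoke extra information, such as the persistence of the quasi-homogeneous structure along $I(t)$ from Section~\ref{section5.2}, which forces the ``axis'' hypersurface $\{F_2^t=0\}$ to survive under the deformation of $\mathcal F_t$. Equivalently, applying the same Koszul computation to $V_c(t)$ produces a second holomorphic family $F_2^{c,t}$, and one must verify that $F_2^t$ and $F_2^{c,t}$ coincide up to a holomorphic rescaling, using the transversal type along $I(t)$ to align the two constructions.
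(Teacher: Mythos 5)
Your construction of the individual families of defining polynomials is essentially sound and parallels the paper's first step (the paper simply cites Sernesi's deformation theory of complete intersections where you run the Koszul/cohomology-and-base-change computation by hand). But the proposition is not proved: the compatibility step that you yourself single out as ``the main obstacle'' is the actual content of the proposition, and you leave it open, offering only candidate strategies. Note also that the same gap occurs one step earlier than you acknowledge: your normalization of the basis of $H^0(\mathcal I_{V_a(t)}(\nu))$ so that $F_0^t$ vanishes on $V_b(t)$ is an \emph{existence} claim ($h^0(\mathcal I_{V_a(t)\cup V_b(t)}(\nu))\geq 1$ for $t\neq 0$), and semicontinuity runs in the wrong direction there too. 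So there are two unclosed lower-bound problems, not one, and neither is resolved by ``openness.''

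The paper closes this gap by a purely algebraic argument that you do not reproduce. First it identifies $I(t)$ with $J(t):=(F_0(t)=F_1(t)=F_2(t)=0)$: one has $I(t)\subset J(t)$ by construction, and equality follows from counting ($\#I(t)=\#J(t)=\nu^3/\gamma$ when $n=3$; for $n\geq 4$ both are smooth connected codimension-three subvarieties). Then, since each ``extra'' defining polynomial $\hat F_j(t)$ vanishes on $I(t)=J(t)$ and $J(t)$ is a regular complete intersection, a Noether-normalization (AF+BG) lemma gives $\hat F_j(t)\in\langle F_0(t),F_1(t),F_2(t)\rangle$; the degree inequalities $\deg F_2(t)=\nu/\gamma<\nu=\deg F_0(t)=\deg F_1(t)$ then force $\hat F_2(t)$ to be a constant multiple of $F_2(t)$, $\hat F_1(t)=F_1(t)+g(t)F_2(t)$, and $\hat F_0(t)=F_0(t)+m(t)F_1(t)+n(t)F_2(t)$, from which the common triple $P_0(t)=F_0(t)+m(t)F_1(t)$, $P_1(t)=F_1(t)$, $P_2(t)=F_2(t)$ is read off. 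This is a finite-dimensional ideal-membership argument, not an appeal to the quasi-homogeneous transversal structure as you suggest; without it (or an equivalent substitute) your proof does not establish that $V_b(t)$ and $V_c(t)$ are fibers of a single map, which is precisely assertion (i).
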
  
\begin{proof} Let $\tilde f_{0}=(F_{0},F_{1},F^\gamma_{2}): \mathbb C^{n+1} \to\mathbb C^{3}$ be the homogeneous expression of $f_{0}$. Then $V_{c}$, $V_{b}$, and $V_{a}$ appear as the complete intersections $(F_{1}=F_{2}=0)$, $(F_{0}=F_{2}=0)$, and $(F_{0}=F_{1}=0)$ respectively. Hence $I(f_{0})=V_{a} \cap V_{b} =V_{a} \cap V_{c}= V_{b} \cap V_{c}$. It follows from \cite[section 4.6, p.235-236]{Ser0} that $V_{a}(t)$ is a complete intersection, say $V_{a}(t)=(F_{0}(t)=F_{1}(t)=0)$, where $(F_{0}(t))_{t \in D_{\epsilon'}}$ and $(F_{1}(t))_{t \in D_{\epsilon'}}$ are deformations of $F_{0}$ and $F_{1}$ and $ D_{\epsilon'}$ is a possibly smaller neighborhood of $0$. Moreover, $F_{0}(t)=0$ and $F_{1}(t)=0$ meet transversely along $V_{a}(t)$. In the same way, it is possible to define $V_{c}(t)$ and $V_{b}(t)$ as complete intersections, say $(\hat F_{1}(t)=F_{2}(t)=0)$ and $(\hat F_{0}(t)=\hat F_{2}(t)=0)$ respectively, where $(F_{j}(t))_{t \in D_{\epsilon'}}$ and $(\hat F_{j}(t))_{t \in D_{\epsilon'}}$ are deformations of $F_{j}$, $0\leq j\leq2$. 

We will prove that we can find polynomials $P_{0}(t)$, $P_{1}(t)$ and $P_{2}(t)$ such that $V_{c}(t)=(P_{1}(t)=P_{2}(t)=0)$, $V_{b}(t)=(P_{0}(t)=P_{2}(t)=0)$ and $V_{a}(t)=(P_{0}(t)=P_{1}(t)=0)$. Observe first that since $F_{0}(t), F_{1}(t)$ and $F_{2}(t)$ are near $F_{0}$, $F_{1}$ and $F_{2}$ respectively, they meet as a regular complete intersection at:
$$J(t)=(F_{0}(t)=F_{1}(t)=F_{2}(t)=0) = V_a(t) \cap (F_2(t)=0).$$
Hence $J(t) \cap (\hat F_{1}(t)=0)=V_{c}(t)\cap V_{a}(t)=I(t)$, which implies that $I(t)\subset J(t).$  Since $I(t)$ and $J(t)$ have $\frac{\nu^{3}}{\gamma}$ points, we have that  $I(t)=J(t)$ for all $t \in D_{\epsilon'}$. 
\begin{remark}In the case $n\geq 4$, both sets are codimension-three smooth and connected submanifolds of $\mathbb P^n$, implying again that $I(t)=J(t).$ In particular, we obtain that $$I(t)=(F_{0}(t)=F_{1}(t)=F_{2}(t)=0)\subset (\hat F_{j}(t)=0), 0\leq j\leq2.$$
 \end{remark}  
 
We will use the following version of Noether's Normalization Theorem (see \cite{ln} p 86):

\begin{lemma} (Noether's Theorem) Let $G_{0},...,G_{k}  \in \mathbb {C}[z_{1},...,z_{m}]$ be homogeneous polynomials where $0 \leq k \leq m$ and $m \geq 2$, and $X=(G_{0}=...=G_{k}=0)$. Suppose that the set $Y:=\{p \in X | dG_{0}(p) \wedge...\wedge dG_{k}(p)=0\}$ is either $0$ or $\emptyset$. If $G  \in \mathbb {C}[z_{1},...,z_{m}]$ satisfies $G|_{X} \equiv 0$, then  $G$  $\in$  $<G_{0},...,G_{k}>$. 
\end{lemma} 
 
Take $k=2$, $G_{0}=F_{0}(t)$, $G_{1}=F_{1}(t)$ and $G_{2}=F_{2}(t)$. Using Noether's Theorem with $Y=0$ and the fact that all polynomials involved are homogeneous, we have $\hat F_{1}(t)$ $\in$ $<F_{0}(t),F_{1}(t),F_{2}(t)>$. Since $deg(F_{0}(t))=deg(F_{1}(t))>deg(F_{2}(t))$, we conclude that $\hat F_{1}(t)=F_{1}(t)+g(t)F_{2}(t)$, where $g(t)$ is a homogeneous polynomial of degree $deg(F_{1}(t))-deg(F_{2}(t))$. Moreover observe that $V_{c}(t)=V(\hat F_{1}(t),F_{2}(t))=V(F_{1}(t),F_{2}(t))$, where $V(H_{1},H_{2})$ denotes the projective algebraic variety defined by $(H_{1}=H_{2}=0)$. Similarly for $V_{b}(t)$ we have that $\hat F_{2}(t)$ $\in$ $<F_{0}(t),F_{1}(t),F_{2}(t)>$.  On the other hand, since $\hat F_{2}(t)$ has the lowest degree, we can assume that $\hat F_{2}(t)=F_{2}(t)$.
   
In an analogous way we have that $\hat F_{0}(t)=F_{0}(t)+m(t)F_{1}(t)+n(t)F_{2}(t)$ for the polynomial $\hat F_{0}(t)$. Now observe that $V(\hat F_{0}(t),\hat F_{2}(t))=V(F_{0}(t)+m(t)F_{1}(t),F_{2}(t))$ where $m(t)\in \mathbb C$ satisfying $m(0)=0$. Hence we can define the family of polynomials as being $P_{0}(t)=F_{0}(t)+m(t)F_{1}(t)$, $P_{1}(t)=F_{1}(t)$ and $P_{2}(t)=F_{2}(t) $. This defines a  family of mappings $({f}_{t})_{t \in D_{\epsilon'}}:\mathbb P^{3} \DashedArrow[->,densely dashed    ]\mathbb P^{2}$, and $V_{a}(t)$, $V_{b}(t)$ and $V_{c}(t)$ are fibers of ${f}_{t}$ for fixed $t$. Observe that, for ${\epsilon'}$ sufficiently small, $({f}_{t})_{t \in D_{\epsilon'}}$ is generic in the sense of definition $3.2$, and its indeterminacy locus $I({f}_{t})$ is precisely $I(t).$ Moreover, since $Gen(3,\nu,1,\gamma)$ is open, we can suppose that this family $({f}_{t})_{t \in D_{\epsilon'}}$ is in $Gen\left(3,\nu,1,\gamma\right)$. This concludes the proof of proposition $5.10$. 
\end{proof}
\label{fbarra} We observe that this family can be considered also as a family of mappings $(\overline{f}_{t})_{t \in D_{\epsilon'}}:\mathbb P^{3} \DashedArrow[->,densely dashed    ]\mathbb P_{[\gamma,\gamma,1]}^2$, where $\overline{f}_{t}=(P_{0}(t),F_{1}(t),F_{2}(t))$ where $\mathbb P_{[\gamma,\gamma,1]}^2$ denotes the weighted projective plane with weights $(\gamma,\gamma,1)$. Moreover, using the map \begin{eqnarray*}
{f_w} : \mathbb P_{[\gamma,\gamma,1]}^2&\to& \mathbb P^2\\
         (x_{0}: x_{1}:x_{2})&\to& (x_{0}: x_{1}: x_{2}^{\gamma})
\end{eqnarray*}
we can factorize $f_{t}$ as being $f_{t}=f_{w}\circ \overline {f_{t}}$ as shown in the diagram below:
$$\small{\xymatrix{
  \mathbb{P}^3 \ar[rr]^{f_{t}} \ar[dr]_{\overline{f_{t}}} && \mathbb{P}^2 \\
  & {\mathbb  P_{[\gamma,\gamma,1]}}\ar[ur]^{f_{w}} }}$$

Now we will prove that the remaining curves $V_{\tau}(t)$ are also fibers of $f_{t}$. In the local coordinates $X(t)=(x_{0}(t),x_{1}(t),x_{2}(t))$ near some point of $I(t)$ we have that the vector field $S$ is diagonal and the components of the map ${f}_{t}$ are written as follows:
\begin{equation}\label{eq3}
P_{0}(t)=u_{0t}x_{0}(t)+x_{1}(t)x_{2}(t)h_{0t}
\end{equation}
\begin{equation*}
P_{1}(t)=u_{1t}x_{1}(t)+x_{0}(t)x_{2}(t)h_{1t}
\end{equation*}
\begin{equation*}
P_{2}(t)=u_{2t}x_{2}(t)+x_{0}(t)x_{1}(t)h_{2t}
\end{equation*}
where the functions $u_{it} \in \mathcal O^{*}(\mathbb C^3,0)$ and $h_{it} \in \mathcal O(\mathbb C^3,0) ,0\leq i \leq 2$. Note that when the parameter $t$ goes to $0$ the functions $h_{i}(t),0\leq i \leq 2$ also goes to $0$.
We want to show that an orbit of the vector field $S$ in the coordinate system $X(t)$ that extends globally like a singular curve of the foliation $\mathcal F_{t}$ is a fiber of ${f}_{t}$.

\begin{lemma}\label{casomenor} Any generic orbit of the vector field $S$ that extends globally as singular curve of the foliations $\mathcal F_{t}$ is also a fiber of $f_{t}$ for fixed $t$.
\end{lemma}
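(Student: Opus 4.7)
\medskip

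\noindent\textbf{Proof plan for Lemma \ref{casomenor}.}

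\smallskip

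The plan is to work entirely in the local chart $X(t)=(x_0,x_1,x_2)$ around a point $p\in I(t)$ furnished by Lemma \ref{Lemma5.8} and equation (\ref{eq3}), where $S=\gamma x_0\partial_{x_0}+\gamma x_1\partial_{x_1}+x_2\partial_{x_2}$ and $\overline{f}_t=(P_0(t),P_1(t),P_2(t))$ has the form exhibited in (\ref{eq3}) with $h_{it}(0)=0$ at $t=0$. A generic orbit $\mathcal O$ of $S$ passes through some $(a_0,a_1,a_2)$ with $a_0a_1a_2\neq 0$, and is cut out locally by the algebraic equations $a_1x_0-a_0x_1=0$ and $a_2^{\gamma}x_0-a_0x_2^{\gamma}=0$, so its Zariski closure $V\subset\mathbb P^n$ is an irreducible algebraic curve meeting $I(t)$ at $p$.

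\smallskip

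Now I would use the standing assumption that $V$ extends globally as a singular curve of $\mathcal F_t$. Combined with the Kupka stability along $V\setminus I(t)$ (Section \ref{section5.1}) and the quasi-homogeneous stability of Proposition \ref{prop5.5}, this identifies $V$ with one of the deformed curves $V_\tau(t)$ coming from Lemma 4.10 (c), for some $\tau\in \mathrm{Sing}(\mathcal G_0)$. The cases $\tau\in\{a,b,c\}$ are already handled by Proposition \ref{recupmapas}, so we may assume $\tau\in\mathrm{Sing}(\mathcal G_0)\setminus\{a,b,c\}$ and $V=V_\tau(t)$ is a holomorphic deformation of the fiber $V_\tau=\overline{f_0^{-1}(\tau)}$.

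\smallskip

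The core step is to show $\overline{f}_t(V_\tau(t))$ is a single point of $\mathbb P^2_{[\gamma,\gamma,1]}$. At $t=0$, $h_{i0}\equiv 0$ so $\overline{f}_0=(u_{00}x_0,u_{10}x_1,u_{20}x_2)$; a direct computation on the parametrization $(e^{\gamma s}a_0,e^{\gamma s}a_1,e^s a_2)$ shows the image collapses to a single weighted-projective point $[u_{00}(0)a_0:u_{10}(0)a_1:u_{20}(0)a_2]$, matching the fact that $V_\tau$ is a fiber of $\overline{f}_0$. For $t$ small, I would argue as follows: the dimension of $\overline{f}_t(V_\tau(t))$ is upper semicontinuous in $t$, so it is $0$-dimensional for $t$ near $0$ unless it jumps; but a jump would contradict the local quasi-homogeneous model, since any $1$-dimensional image would force $\overline{f}_t|_{V_\tau(t)}$ to be generically finite, whereas the tangent direction to $V_\tau(t)$ at $p$ is $(\gamma a_0,\gamma a_1,a_2)=S(a)$ and the derivative $D\overline{f}_t(p)$ kills $S$ up to higher order (a consequence of (\ref{eq3}) and the fact that $h_{it}$ vanish on the coordinate axes of the $S$-eigenspaces at $p$). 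Rigidity of the complete-intersection description of $V_a(t),V_b(t),V_c(t)$ from Proposition \ref{recupmapas}, combined with the Noether normalization argument already used there, then forces the equations of $V_\tau(t)$ to be expressible in terms of $(P_0(t),P_1(t),P_2(t))$, so $V_\tau(t)$ is a fiber of $\overline{f}_t$ and hence of $f_t=f_w\circ\overline{f}_t$.

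\smallskip

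The main obstacle I expect is controlling the perturbation terms $h_{it}$ in (\ref{eq3}): generic $S$-orbits need not be fibers of $\overline{f}_t$, so one must leverage the global algebraic-geometric constraint that $V_\tau(t)$ is an algebraic singular curve of the foliation (not merely a leaf of a local vector field) to reduce the image dimension to zero. The quasi-homogeneous stability of Proposition \ref{prop5.5} and the constancy of the transverse Kupka type along $V_\tau(t)\setminus I(t)$ are what make the reduction possible.
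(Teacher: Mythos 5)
Your setup---the local chart at a point of $I(t)$, the parametrization of a generic $S$-orbit, and the observation that at $t=0$ the image collapses to a single weighted-projective point with all coordinates nonzero---matches the opening of the paper's proof. The gap is in your core step, where you must show that $\overline{f}_t(V_\tau(t))$ remains a point for $t\neq 0$. The dimension of the set-theoretic image of a family of maps is \emph{not} upper semicontinuous in $t$ in the direction you need: for $g_t(x)=tx$ the image is a point at $t=0$ and a line for every $t\neq 0$, so ``$0$-dimensional unless it jumps'' is precisely the statement to be proved, not a consequence of semicontinuity. Your attempt to exclude the jump is also insufficient: the vanishing of $D\overline{f}_t$ on the tangent direction $S(a)$ at the single point $p\in I(t)$ does not prevent $\overline{f}_t|_{V_\tau(t)}$ from being nonconstant (a nonconstant map on a curve may perfectly well have critical points), and the assertion that the $h_{it}$ of (\ref{eq3}) vanish on the coordinate axes is not justified by anything in the normal form. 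Finally, the appeal to ``rigidity of the complete-intersection description plus Noether normalization'' is not carried out and would not close the argument as stated: for a fiber over $\tau=[\tau_0:\tau_1:\tau_2]$ with all coordinates nonzero, Noether's theorem only places the deformed defining polynomials in $\langle F_0(t),F_1(t),F_2(t)\rangle$ with a \emph{polynomial} (not constant) coefficient in front of $F_2(t)$, because $\deg F_0=\deg F_1=\gamma\deg F_2$; this does not by itself exhibit $V_\tau(t)$ as a fiber of $f_t$.

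The paper closes this gap by a global, not infinitesimal, argument. From the local normal form one extracts the factor $s^{\gamma}$ from $f_t(\delta(s))$ and sees that all three components of (\ref{eq4}) are units near $I(t)$. Away from $I(t)$, the set $V_\tau\setminus\bigcup_j B_j(p_j)$ is compact and $f_0(V_\tau)=[d:e:f]$ with $d,e,f\neq 0$, so by continuity the components of $f_t$ do not vanish along $V_\tau(t)$ for small $t$, neither inside nor outside the balls $B_j(p_j)(t)$. Since $f_t(V_\tau(t))$ is an algebraic subset of $\mathbb P^2$, if it were a curve it would meet every line, in particular the coordinate lines, forcing some component to vanish somewhere on $V_\tau(t)$, a contradiction; hence the image is a point and $V_\tau(t)$ is a fiber. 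You should replace your semicontinuity and derivative arguments with this compactness-plus-nonvanishing step (or an equally global one); the local data at $p$ alone cannot yield the conclusion.
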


\begin{proof} To simplify the notation we will omit the index $t$. Let $\delta(s)$ be a generic orbit of the vector field $S$ (here by a generic orbit we mean an orbit that is not  a coordinate axis). We can parametrize $\delta(s)$ as $s\to(as^{\gamma},bs^{\gamma},cs)$, $a\neq0,b\neq0,c\neq0$. Without loss of generality we can suppose that $a=b=c=1$.
 We have $$f_{t}(\delta(s))=[(s^{\gamma}u_{0}+s^{(1+\gamma)}h_{0}):(s^{\gamma}u_{1}+s^{(1+\gamma)}h_{1}):(su_{2}+s^{2\gamma}h_{2})^\gamma].$$ Hence we can extract the factor $s^{\gamma}$ from $f_{t}(\delta(s))$ and we obtain \begin{equation}\label{eq4}f_{t}(\delta(s))=[(u_{0}+sh_{0}):(u_{1}+s^{l}h_{1}):(u_{2}+s^{2\gamma}h_{2})^\gamma].\end{equation}

Since $V_{\tau}$ is a fiber, $f_{0}(V_{\tau})=[d:e:f]\in \mathbb P^2$ with $d\neq0,e\neq0,f\neq0$. If we take a covering of $I(f)=\{p_{1},..., p_{\frac{\nu^3}{\gamma}}\}$ by small open balls $B_{j}(p_{j})$, $1\leq j\leq\frac{\nu^3}{\gamma}$, the set $V_{\tau}\backslash\cup_{j}B_{j}(p_{j})$ is compact. For a small deformation $f_{t}$ of $f_{0}$ we have that $f_{t}[V_{\tau}(t)\backslash\cup_{j}B_{j}(p_{j})(t)]$ stays near $f[V_{\tau}\backslash\cup_{j}B_{j}(p_{j})]$. Hence for $t$ sufficiently small the components of expression \ref{eq4} do not vanish both inside as well as outside of the neighborhood $\cup_{j}B_{j}(p_{j})(t)$. 
 
This implies that the components of $f_{t}$ do not vanish along each generic fiber that extends locally as a singular curve of the foliation $\mathcal F_{t}$. This is possible only if $f_{t}$ is constant along these curves. In fact, $f_{t}(V_{\tau}(t))$ is either a curve or a point. If it is a curve then it cuts all lines of $\mathbb P^2$ and therefore the components should be zero somewhere. Hence $f_{t}(V_{\tau}(t))$ is constant and we conclude that $V_{\tau}(t)$ is a fiber.  
 
Observe also that when we make a blow-up with weights $(\gamma,\gamma,1)$ at the points of $I(f_{t})$ we solve completely the indeterminacy points of the mappings $f_{t}$ for each $t$.
\end{proof}

\subsubsection{Part 2} Let us now define a family of foliations $(\mathcal {G}_{t})_{t\in D_{\epsilon}}, \mathcal {G}_{t} \in  \mathcal A$ (see Section \ref{section4}) such that  $\mathcal{F}_{t}= f_{t}^{ \ast}(\mathcal{G}_{t})$ for all $t\in D_{\epsilon}$. Firstly we consider the case $n=3.$
Instead of utilize the foliation $\mathcal F$ obtained as the foliation $f^*\mathcal G$, the idea that we will utilize in this part of the proof is to consider $\mathcal F$ on $\mathbb P^n$ defined as the foliation pull-back foliation from $\mathbb P^n$ to $\mathbb P_{[\gamma,\gamma,1]}^2$. 
 \begin{eqnarray*}
\overline{f}:\mathbb P^n&\to&\mathbb P_{[\gamma,\gamma,1]}\\
         \overline{f}^*\eta &\to& \eta.
\end{eqnarray*} once they define the same foliation.
 Let $M_{[\gamma,\gamma,1]}(t)$ be the family of ``complex algebraic threefolds" obtained from $\mathbb P^3$ by blowing-up with weights $(\gamma, \gamma,1)$ at the $\frac{\nu^{3}}{ \gamma}$ points $p_{1}(t),..., p_{j}(t),...,p_{\frac{\nu^{3}}{\gamma}}(t)$ corresponding to $I(t)$ of $\mathcal F_{t}$; and denote by $$\pi_{w}(t):M_{[\gamma,\gamma,1]}(t) \to \mathbb P^3$$ the blowing-up map. The exceptional divisor of $\pi_{w}(t)$ consists of ${\frac{\nu^{3}}{\gamma}}$ orbifolds \break $E_{j}(t)=\pi_{w}(t)^{-1}(p_{j}(t)),$ $1\leq j \leq {\frac{\nu^{3}}{\gamma}}$, which are weighted projective planes of the type $\mathbb P_{[\gamma,\gamma,1]}^2$.
 More precisely, if we blow-up $\mathcal{F}_{t}$ at the point $p_{j}(t)$, then the restriction of the strict transform $\pi_{w}^{\ast}\mathcal{F}_{t}$ to the exceptional divisor $E_{j}(t)=\mathbb P_{[\gamma,\gamma,1]}^2$ is the same quasi-homogeneous $1$-form that defines $\mathcal{F}_{t}$ at the point $p_{j}(t)$.
Using the map \begin{eqnarray*}
{f_w} : \mathbb P_{[\gamma,\gamma,1]}^2&\to& \mathbb P^2\\
         (x_{0}: x_{1}:x_{2})&\to& (x_{0}: x_{1}: x_{2}^{\gamma})
\end{eqnarray*}
it follows that we can push-forward the foliation to $\mathbb P^2$. 
Let us denote by \break$ \mathbb{F}ol'_{2}[d',2,(\gamma,\gamma,1)]$ the set of
 $\{\hat{\mathcal G}\}$  saturated foliations of degree $d'=\gamma(d+1)+1$ on $\mathbb P^2_{[\gamma,\gamma,1]}$ with one invariant line in general position and $Il_{1}(d,2)$  the subsets of saturated foliations with an invariant line in $\mathbb P^2$ respectively. The mapping ${f_w} : \mathbb P_{[\gamma,\gamma,1]}^2\to \mathbb P^2$ induces a natural isomorphism $({f_w})_*: Il_{1}(d,2) \to\mathbb{F}ol'_{2}[d',2,(\gamma,\gamma,1)]$. 
 With this process in mind we produce a family of holomorphic foliations in $\mathcal A\subset Il_{1}(d,2)$. This family is the ``holomorphic path'' of candidates to be a deformation of $\mathcal G_{0}$. In fact, since $(\mathcal {A}'={f_w})_*{(\mathcal A)}$  is an open set inside $ \mathbb{F}ol'_{2}[d',2,(\gamma,\gamma,1)]$ we can suppose that this family is inside $\mathcal A$. Hence using the mapping ${f_w}_*$ we can transport holomorphic from $\mathcal A$ to $\mathcal {A}'$ and vice-versa.

We fix the exceptional divisor $E_1(t)$ to work with and we denote by $\hat {\mathcal{G}}_t \in \mathcal {A}'$ the restriction of $\pi_w^*\mathcal{F}_t$ to $E_1(t)$. As we have seen, this process produces foliations in $\mathcal A'$ up to a linear automorphism of $\mathbb P_{[\gamma,\gamma,1]}^2$. Consider the family of mappings $\overline{f}_{t}:\mathbb P^{3} \DashedArrow[->,densely dashed    ]\mathbb P_{[\gamma,\gamma,1]}^2$, ${t \in D_{\epsilon'}}$ defined in Proposition \ref{recupmapas}. We will consider the family $(\overline{f}_{t})_{t\in D_{\epsilon}}$ as a family of rational maps  $\overline{f}_{t}:\mathbb P^3  \DashedArrow[->,densely dashed    ] E_{1}(t)$; we decrease $\epsilon$ if necessary. Note that the map $$\overline{f}_{t}\circ \pi_{w}(t):M_{[\gamma,\gamma,1]}(t) \backslash \cup_{j} E_{j}(t) \to E_{1}(t) \simeq P_{[\gamma,\gamma,1]}^2$$ extends holomorphically, that is, as an orbifold mapping, to 
$$\hat{f}_{t}:M_{[\gamma,\gamma,1]}(t) \to   E_{1}(t)\simeq \mathbb P_{[\gamma,\gamma,1]}^2.$$
This is due to the fact that each orbit of the vector field $S_t$ determines an equivalence class in $\mathbb P_{[\gamma,\gamma,1]}^2$ and is a fiber of the map $$(x_{0}(t),x_{1}(t),x_{2}(t))\to(x_{0}(t), x_{1}(t),x_{2}^\gamma(t)).$$

The mapping  $\overline{f}_t$ can be interpreted as follows. Each fiber of $\overline{f}_t$ meets $p_{j}(t)$ once, which implies that each fiber of $\hat{f}_{t}$ cuts $E_{1}(t)$ once outside of the singular line in $[M_{[\gamma,\gamma,1]}(t) \cap E_{1}(t)]$. Since $M_{[\gamma,\gamma,1]}(t) \backslash \cup_{j} E_{j}(t)$ is biholomorphic to $\mathbb P^3 \backslash I(t)$, after identifying $E_{1}(t)$ with $\mathbb P_{[\gamma, \gamma,1]}^2$, we can imagine that if $q \in M_{[\gamma,\gamma,1]}(t) \backslash \cup_{j} E_{j}(t)$ then $\hat{f}_{t}(q)$ is the intersection point of the fiber $\hat{f}_{t}^{-1}(\hat{f}_{t}(q))$ with $E_{1}(t)$. We obtain a mapping $$\hat{f}_{t}:M_{[\gamma,\gamma,1]}(t) \to \mathbb P_{[\gamma,\gamma,1]}.$$
It can be extended over the singular set of $M_{[\gamma,\gamma,1]}(t)$ using Riemann's  Extension Theorem. This is due to the fact that the orbifold $M_{[ \gamma,\gamma,1]}(t)$ has singular set of codimension $2$ and these singularities are of the quotient type; therefore it is a normal complex space. We shall also denote this extension by $\hat{f}_{t}$ to simplify the notation. We remark that the blowing-up with weights $(\gamma,\gamma,1)$ can completely solve the indeterminacy set of $\overline{f}_t$ or ${f}_t$ for each $t$ as the reader can check. 
With all these ingredients we can define the foliation $\tilde{\mathcal{F}}_{t}=\overline{f}_{t}^\ast(\hat {\mathcal{G}}_t
)={f}_{t}^\ast({\mathcal{G}}_t)\in PB(\Gamma-1,\nu,1,1,\gamma)$. This foliation is a deformation of $\mathcal{F}_0$.  Based on the previous discussion let us denote ${\mathcal{F}_{1}}({t})= \pi_{w}(t)^{\ast} ({\mathcal{F}}_{t})$ and $\hat{\mathcal{F}_{1}}({t})= \pi_{w}(t)^{\ast} (\tilde{\mathcal{F}}_{t})$.

\begin{lemma}If ${\mathcal{F}_{1}}({t})$ and $\hat{\mathcal{F}_{1}}({t})$ are the foliations defined previously, we have that $${\mathcal{F}_{1}}({t})|_{{E_{1}(t)}\simeq \mathbb P_{[\gamma,\gamma,1]}^2}={\hat {\mathcal G}_{t}}=\hat{\mathcal{F}_{1}}({t})|_{{E_{1}(t)}\simeq \mathbb P_{[\gamma,\gamma,1]}^2}$$ where ${\hat {\mathcal G}_{t}}$ is the foliation induced on ${E_{1}(t)}\simeq \mathbb P_{[\gamma,\gamma,1]}^2$ by the quasi-ho\-mo\-ge\-ne\-ous $1$-form $\eta_{p_{1}(t)}$. 
\end{lemma}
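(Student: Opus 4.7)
The first equality is essentially a restatement of the definition of $\hat{\mathcal G}_t$. Recall that $\hat{\mathcal G}_t$ was introduced as the restriction of $\pi_w(t)^{*}\mathcal F_t$ to the exceptional divisor $E_1(t)\simeq\mathbb P^2_{[\gamma,\gamma,1]}$, and by the quasi-homogeneous normal form of Theorem~\ref{teo5.3} together with the local weighted-blow-up description of Section~\ref{section5.2}, this restriction is precisely the foliation on $\mathbb P^2_{[\gamma,\gamma,1]}$ defined by $\eta_{p_1(t)}$. Since $\mathcal F_1(t)=\pi_w(t)^{*}\mathcal F_t$ by construction, the equality $\mathcal F_1(t)|_{E_1(t)}=\hat{\mathcal G}_t$ is immediate.

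For the second equality I would argue by functoriality of pull-back. Writing
\[
\hat{\mathcal F_1}(t)=\pi_w(t)^{*}\tilde{\mathcal F}_t=\pi_w(t)^{*}\overline f_t^{\,*}\hat{\mathcal G}_t=\bigl(\overline f_t\circ\pi_w(t)\bigr)^{*}\hat{\mathcal G}_t=\hat f_t^{\,*}\hat{\mathcal G}_t,
\]
and restricting to $E_1(t)$, one obtains $\hat{\mathcal F_1}(t)|_{E_1(t)}=(\hat f_t|_{E_1(t)})^{*}\hat{\mathcal G}_t$. The task therefore reduces to showing that $\hat f_t|_{E_1(t)}$ coincides with the identity of $E_1(t)\simeq\mathbb P^2_{[\gamma,\gamma,1]}$ under the canonical identification supplied by the weighted blow-up.

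To verify this, I would work in the local coordinates $(x_0,x_1,x_2)$ around $p_1(t)$ in which $S_t$ is diagonal and $\overline f_t=(P_0,P_1,P_2)$ has the form~\eqref{eq3}. A typical weighted chart of $\pi_w(t)$ reads $(u,\xi_0,\xi_1,\xi_2)\mapsto(u^\gamma\xi_0,u^\gamma\xi_1,u\,\xi_2)$, with $E_1(t)$ cut out by $\{u=0\}$. Substituting into \eqref{eq3} and factoring the appropriate power of $u$ out of each component, the induced map on $\{u=0\}$ is $[\xi_0:\xi_1:\xi_2]\mapsto[u_{0t}(0)\xi_0:u_{1t}(0)\xi_1:u_{2t}(0)\xi_2]$, a weighted linear automorphism. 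This is precisely the identification of $E_1(t)$ with $\mathbb P^2_{[\gamma,\gamma,1]}$ implicit in the construction of $\hat{\mathcal G}_t$; once we use it, $\hat f_t|_{E_1(t)}=\mathrm{id}$, and thus $(\hat f_t|_{E_1(t)})^{*}\hat{\mathcal G}_t=\hat{\mathcal G}_t$.

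The point I expect to demand the most care is this coherence of identifications: one must check that the isomorphism $E_1(t)\simeq\mathbb P^2_{[\gamma,\gamma,1]}$ used to read off $\hat{\mathcal G}_t$ from $\pi_w(t)^{*}\mathcal F_t$ agrees with the isomorphism that arises when one extends $\overline f_t\circ\pi_w(t)$ through Riemann Extension across the quotient singularities of $M_{[\gamma,\gamma,1]}(t)$, and that this compatibility is independent of the chart chosen. Once this is settled, the explicit formula above forces $\hat f_t|_{E_1(t)}$ to act as the identity on weighted-homogeneous foliations, and the two sides of the claimed equality coincide.
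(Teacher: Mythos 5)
Your argument is correct and follows essentially the same route as the paper: the first equality comes from $i_{S_t}\eta_{p_1(t)}=0$, so that $\eta_{p_1(t)}$ descends to a foliation on $E_1(t)\simeq\mathbb P^2_{[\gamma,\gamma,1]}$, and the second from $\hat{\mathcal F_1}(t)=\hat f_t^{*}(\hat{\mathcal G}_t)$ together with the fact that, under the canonical identification, $\hat f_t|_{E_1(t)}$ is the identity. Your explicit chart computation showing that $\overline f_t\circ\pi_w(t)$ induces a weighted-linear automorphism on $\{u=0\}$ just makes precise what the paper calls the ``geometrical interpretation'' of $\hat f_t$.
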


\begin{proof} In a neighborhood of $p_1(t) \in I(t)$, ${\mathcal{F}}_{t}$ is represented by the quasi-homogeneous $1$-form $\eta_{p_{1}(t)}$. This $1$-form satisfies $i_{S_t}\eta_{p_{1}(t)}=0$ and therefore naturally defines a foliation on the weighted projective space $E_{1}(t)\simeq \mathbb P_{[\gamma,\gamma,1]}^2$. This proves the first equality. The second equality follows from the geometrical interpretation of the mapping $\hat{f}_{t}:M_{[\gamma,\gamma,1]}(t) \to  \mathbb P_{[\gamma,\gamma,1]}^2 $, since $\hat{\mathcal{F}_{1}}({t})=\hat f_{t}^{*}(\hat{\mathcal {G}}_{t})$. \end{proof}
 
 Now we choose an afine chart of the space $\mathbb P_{[\gamma,\gamma,1]}^2$. This afine chart is biholomorphic to $\mathbb C^2$. In this affine chart for each $t$ the foliation $(\hat{\mathcal {G}}_{t})$ has $d^2$ singular points.
 
Let ${\tau}_{1}(t)$ be a singularity of $\hat{\mathcal G}_{t}$ outside of the line at infinity . Since the map $t \to {\tau}_{1}(t) \in \mathbb P_{[\gamma,\gamma,1}^2$ is holomorphic, there exists a holomorphic family of automorphisms of $\mathbb P_{[\gamma,\gamma,1}^2$, $t \to H(t)$ such that  ${\tau}_{1}(t)=[0:0:1]$ $\in {{E_{1}(t)}\simeq \mathbb P_{[\gamma,\gamma,1}^2}$  is kept fixed.  Observe that such a singularity has non algebraic separatrices at this point. Fix a local analytic coordinate system $(x_t,y_t)$ at 
${\tau}_{1}(t)$ such that the local separatrices are $(x_t=0)$ and $(y_t=0)$, respectively. Here we are considering the affine chart of $\mathbb P_{[\gamma,\gamma,1}^2$ which is biholomorphic to $\mathbb C^2$. This is useful because the foliations ${\mathcal G}_{t}$ and $\hat{\mathcal G}_{t}$ in this local coordinates are at least bihomolomorphic equivalents. Observe that the local smooth hypersurfaces along $\hat V_{\tau_1(t)}=\hat{f}_{t}^{-1}({\tau}_{1}(t))$ defined by $\hat X_t:=(x_t\circ\hat{f}_{t}=0)$ and 
$\hat Y_t:=(y_t\circ\hat{f}_{t}=0)$ are invariant for $\hat{\mathcal{F}_{1}}({t})$. Furthermore, they meet transversely along  $\hat V_{\tau_1(t)}$. On the other hand, $\hat V_{\tau_1(t)}$ is also contained in the Kupka set of ${\mathcal{F}_{1}}({t})$.  Therefore there are two local smooth hypersurfaces $X_t:=(x_t\circ\hat{f}_{t}=0)$ and 
$Y_t:=(y_t\circ\hat{f}_{t}=0)$ invariant for ${\mathcal{F}_{1}}({t})$ such that:
\begin{enumerate}
\item $X_t$ and $Y_t$ meet transversely along $\hat V_{\tau_1(t)}$.
\item$X_t\cap\pi_{w}(t)^{-1}(p_{1}(t))=(x_t=0)=\hat X_t\cap\pi_{w}(t)^{-1}(p_{1}(t))$ and $Y_t\cap\pi_{w}(t)^{-1}(p_{1}(t))=(y_t=0)=\hat Y_t\cap\pi_{w}(t)^{-1}(p_{1}(t))$ (because ${\mathcal{F}_{1}}({t})$ and $\hat{\mathcal{F}_{1}}({t})$) coincide on ${{E_{1}(t)}\simeq \mathbb P^2}$).
\item $X_t$ and $Y_t$ are deformations of $X_0=\hat X_0$ and $Y_0=\hat Y_0$, respectively. 
\end{enumerate}

\begin{lemma}\label{lemafund}$X_t=\hat X_t$ for small $t$.
\end{lemma}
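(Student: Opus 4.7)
The strategy is to show that both $X_t$ and $\hat X_t$ are, in a tubular neighborhood of the common Kupka component $\hat V_{\tau_1(t)}\subset M_{[\gamma,\gamma,1]}(t)$, the unique smooth invariant hypersurface of a Kupka-product structure whose trace on the exceptional divisor $E_1(t)$ is the prescribed separatrix $(x_t=0)$. Since both traces coincide and both product structures restrict to $\hat{\mathcal G}_t$ on $E_1(t)$, uniqueness of separatrices of hyperbolic Kupka singularities will force $X_t=\hat X_t$, at first locally and then globally by irreducibility.

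First, I would invoke Proposition~\ref{prop5.5} together with the discussion of Section~\ref{section5.1}: in a tubular neighborhood of $\hat V_{\tau_1(t)}$, the foliation $\mathcal F_1(t)=\pi_w(t)^\ast\mathcal F_t$ has a holomorphic local product decomposition of the form (regular $1$-dim foliation along $\hat V_{\tau_1(t)})\times(\text{2D Kupka germ})$, where the transverse 2D model read on a slice transverse to $\hat V_{\tau_1(t)}$ at the point $\tau_1(t)\in E_1(t)$ is exactly $\mathcal F_1(t)\big|_{E_1(t)}=\hat{\mathcal G}_t$. Because $\hat{\mathcal G}_t\in\mathcal A'$ is hyperbolic at $\tau_1(t)$, this transverse model has exactly two smooth local separatrices, $(x_t=0)$ and $(y_t=0)$, and the product structure propagates them to exactly two smooth invariant hypersurfaces in the tube, which are $X_t$ and $Y_t$. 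Exactly the same analysis applied to $\hat{\mathcal F}_1(t)=\pi_w(t)^\ast\tilde{\mathcal F}_t=\pi_w(t)^\ast f_t^\ast\mathcal G_t$ produces two hypersurfaces $\hat X_t$ and $\hat Y_t$, characterized the same way.

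Second, the preceding lemma gives $\mathcal F_1(t)\big|_{E_1(t)}=\hat{\mathcal F}_1(t)\big|_{E_1(t)}=\hat{\mathcal G}_t$, so the two transverse 2D Kupka models read on $E_1(t)$ coincide, and therefore their local separatrices at $\tau_1(t)$ agree: the germs of $X_t\cap E_1(t)$ and $\hat X_t\cap E_1(t)$ at $\tau_1(t)$ are both equal to $(x_t=0)$, matching hypothesis~(2). Together with the fact that both $X_t$ and $\hat X_t$ contain $\hat V_{\tau_1(t)}$ and have, by hypothesis~(3), the common limit $X_0=\hat X_0$ at $t=0$, uniqueness of the Kupka-saturate of a prescribed transverse separatrix (a consequence of the holomorphic trivialization along the Kupka component and the hyperbolicity of the 2D model, cf.\ \cite{ccgl},\cite{gln}) yields $X_t=\hat X_t$ in a neighborhood of $\hat V_{\tau_1(t)}$ for all sufficiently small $t$.

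Finally, $X_t$ and $\hat X_t$ are (closures of) irreducible algebraic hypersurfaces in $M_{[\gamma,\gamma,1]}(t)$ deforming the irreducible $X_0=\hat X_0$; since they coincide on a nonempty open set they coincide globally, proving the lemma. The main technical obstacle is the uniqueness statement in the second step: one must verify carefully that the two a priori distinct product structures (one for each foliation, which we do not yet know to be equal away from $E_1(t)$) give rise to the \emph{same} global separatrix, and this rests on the fact that a hyperbolic Kupka-type invariant hypersurface is determined by its intersection with any single transversal together with the continuous variation from $t=0$.
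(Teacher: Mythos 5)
There is a genuine gap at precisely the step you flag as the main obstacle, and the tools you invoke do not close it. Your argument rests on ``uniqueness of the Kupka-saturate of a prescribed transverse separatrix'': since $X_t$ and $\hat X_t$ have the same trace $(x_t=0)$ on the transversal $E_1(t)$, you conclude they coincide near $\hat V_{\tau_1(t)}$. But $X_t$ is invariant for ${\mathcal{F}_{1}}(t)$ while $\hat X_t$ is invariant for $\hat{\mathcal{F}_{1}}(t)$, and these are two \emph{different} foliations, known (by the preceding lemma) to agree only on $E_1(t)$ itself, not on a neighborhood of $\hat V_{\tau_1(t)}$. The saturation of a transverse separatrix depends on which foliation does the saturating: two foliations sharing a Kupka component and agreeing on one transversal can have distinct invariant hypersurfaces with the same trace there. (Local model: along a Kupka curve with coordinate $z$ and transverse model $\lambda y\,dx-x\,dy$, replace $y$ by $y+x\,h(z)$ with $h(0)=0$; the restriction to $(z=0)$ is unchanged, but the invariant hypersurface $(y+x\,h(z)=0)$ differs from $(y=0)$.) Any correct argument must exploit a global property of the \emph{compact} fiber $\hat V_{\tau_1(t)}$, and your appeal to ``continuous variation from $t=0$'' does not supply one. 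A secondary error: $X_t$ and $\hat X_t$ are not algebraic hypersurfaces --- the separatrix $(x_t=0)$ is deliberately chosen non-algebraic, and this is essential for the tangency argument that follows the lemma --- so your final globalization-by-irreducibility step is both unavailable and unnecessary (the statement concerns germs along $\hat V_{\tau_1(t)}$).

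The paper's proof supplies exactly the missing global input, by a quite different mechanism. One slices $X_t$ by the level sets $H_\epsilon=(y_t\circ\hat f_t=\epsilon)$; the slices $\Sigma_\epsilon=X_t\cap H_\epsilon$ are \emph{compact} curves because they are small deformations of the compact fiber $\hat V_{\tau_1(t)}$ (here is where $X_t$ and $\hat X_t$ being deformations of the same $X_0$ is used). Since $\hat f_t$ maps $\Sigma_\epsilon$ into a small coordinate neighborhood of $\tau_1(t)$, the maximum principle forces $\hat f_t|_{\Sigma_\epsilon}$ to be constant; hence $\hat f_t(X_t)$ is a curve $C$ and $X_t=\hat f_t^{-1}(C)$ is a union of fibers of $\hat f_t$. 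Only after this does the trace condition $X_t\cap E_1(t)=(x_t=0)$ pin down $C=(x_t=0)$ and give $X_t=\hat f_t^{-1}(x_t=0)=\hat X_t$. This compactness-plus-maximum-principle step, showing that the a priori unknown invariant hypersurface $X_t$ is fibered by $\hat f_t$, is the heart of the lemma and is absent from your proposal.
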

\begin{proof} Let us consider the projection $\hat{f}_{t}:M_{[\gamma,\gamma,1]}(t) \to  \mathbb P_{[\gamma,\gamma,1]}^2$ on a neighborhood of the regular fibre  $\hat V_{\tau_1(t)}$, and fix local coordinates $x_t,y_t$ on $\mathbb P_{[\gamma,\gamma,1}^2$ such that $X_t:=(x_t\circ\hat{f}_{t}=0)$. For small $\epsilon$, let $H_{\epsilon}=(y_t\circ\hat{f}_{t}=\epsilon)$. Thus $\hat\Sigma_\epsilon=\hat X_t\cap H_\epsilon$ are (vertical) compact curves, deformations of $\hat\Sigma_0=\hat V_{\tau_1(t)}$. Set $\Sigma_\epsilon=X_t\cap\hat H_\epsilon$. The $\Sigma_\epsilon's$, as the $\hat\Sigma_\epsilon's$, are compact curves (for $t$ and $\epsilon$ small), since $X_t$ and $\hat X_t$ are both deformations of the same $X_0$. Thus for small $t$, $X_t$ is close to $\hat X_t$. It follows that $\hat{f}_{t}(\Sigma_\epsilon)$ is an analytic curve contained in a small neighborhood of ${\tau_1(t)}$, for small $\epsilon$. By the maximum principle, we must have that $\hat{f}_{t}(\Sigma_\epsilon)$ is a point, so that  $\hat{f}_{t}(X_t)=\hat{f}_{t}(\cup_{\epsilon}\Sigma_\epsilon)$ is a curve $C$, that is, $X_t=\hat{f}_{t}^{-1}(C)$. But $X_t$ and $\hat X_t$ intersect the exceptional divisor ${{E_{1}(t)}=\mathbb P_{[\gamma,\gamma,1}^2}$ along the separatrix $(x_t=0)$ of $\mathcal G_t$ through ${\tau_1(t)}$. This implies that $X_t=\hat{f}_{t}^{-1}(C)=\hat{f}_{t}^{-1}(x_t=0)=\hat X_t$.
\end{proof}
We have proved that the foliations $\mathcal F_t$ and $\tilde{\mathcal F_t}$ have a common local leaf: the leaf that contains $\pi_{w}(t)\left(X_t\backslash\hat V_{\tau_1(t)}\right)$ which is not algebraic. Let $D(t):=Tang(\mathcal{F}(t),\hat {\mathcal{F}}(t))$ be the set of tangencies between $\mathcal{F}(t)$ and $\hat {\mathcal{F}}(t)$. This set can be defined by $D(t)=\{ Z \in \mathbb {C}^4; \Omega(t) \wedge\hat {\Omega}(t)=0\},$ where  $\Omega(t)$ and  $\hat{\Omega}(t)$ define $\mathcal{F}(t)$ and $\hat{\mathcal{F}}(t)$, respectively. Hence it is an algebraic set. Since this set contains an immersed non-algebraic surface $X_t$, we necessarily have that  $D(t)=\mathbb P^3.$ This proves Theorem B in the case $n=3.$

Suppose now that $n\geq4$. The previous argument implies that if $\Upsilon$ is a generic $3-$plane in $\mathbb P^n$, we have $\mathcal{F}(t)_{|\Upsilon}=\hat{\mathcal{F}}(t)_{|\Upsilon}$. In fact, such planes cut transversely every strata of the singular set, and $I(t)$ consists of $\frac{\nu^{3}}{ \gamma} $ points. This implies that $f_{t}$ is generic for $|t|$ sufficiently small. We can then repeat the previous argument, finishing the proof of Theorem A.

Recall from Definition \ref{generic} the concept of a generic map. Let \break $f  \in BRM\left(n,\nu,1,1,\gamma\right)$, $I(f)$ its indeterminacy locus and $\mathcal F$ a foliation on $\mathbb P^n$, $n\geq3$. Consider the following properties:

\begin{center}
\begin{minipage}{10cm}
$\mathcal{P}_1:$ If n=3, at any point $p_{j}\in I(f)$ $\mathcal F$ has the following local structure: there exists an analytic coordinate system $(U^{p_{j}},Z^{p_{j}})$ around $p_{j}$ such that $Z^{p_{j}}(p_{j})=0 \in ({\mathbb C^3,0})$ and $\mathcal F|_{(U^{p_{j}},Z^{p_{j}})}$ can be represented by a quasi-homogenous $1$-form $\eta_{p_{j}}$ (as described in the Lemma \ref{lema5.6}) such that
\begin{enumerate} 
\item[(a)] $Sing(d \eta_{p_{j}}) = {0}$,
\item[(b)] $0$ is a quasi-homogeneous singularity of the type $\left [{\gamma}:{\gamma}:1\right ]$.
\end{enumerate}
If $n\geq4$, $\mathcal F$ has a local structure product: the situation for n=3 ``times'' a regular foliation in ${\mathbb C^{n-3}}$.
\end{minipage}
\end{center}

\begin{center}
\begin{minipage}{10cm}
$\mathcal{P}_2:$ There exists a fibre $f^{-1}(q)=V(q)$ such that $V(q)=f^{-1}(q)\backslash I(f)$ is contained in the Kupka-Set of $\mathcal F$ and $V(q)$ is not contained in $(F_{2}=0)$.
\end{minipage}
\end{center}

\begin{center}
\begin{minipage}{10cm}
$\mathcal{P}_3:$ $V(q)$ has transversal type $X$, where $X$ is a germ of vector field on $({\mathbb C^2,0})$ with a non algebraic separatrix and such that $ 0 \in{\mathbb C^2}$ is a non-degenerate singularity with eigenvalues $\lambda_{1}$ and $\lambda_{2}$, $\frac{\lambda_{2}}{\lambda_{1}} \notin {\mathbb R}$.
\end{minipage}
\end{center}

Lemma \ref{lemafund} allows us to prove the following results:

\begin{main}\label{teoc}
In the conditions above, if properties $\mathcal{P}_1$, $\mathcal{P}_2$ and $\mathcal{P}_3$ hold then $\mathcal F$ is a pull back foliation, $\mathcal {F}= f^{*}(\mathcal{G})$, where $\mathcal{G}$ is of degree $d\geq2$ on $\mathbb P^2$ with one invariant line.  
\end{main}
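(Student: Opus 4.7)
The strategy mirrors the last part of the proof of Theorem B (Section~\ref{subsection5.4}): first construct, from the foliation $\mathcal F$ itself, a candidate foliation $\mathcal G$ on $\mathbb P^2$ with one invariant line by restricting to an exceptional divisor of a suitable weighted blow-up; then show via a tangency/maximum principle argument that $\mathcal F$ must actually coincide with $f^{\ast}\mathcal G$.

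First I would handle the case $n=3$. Using property $\mathcal P_1$, at each point $p_j\in I(f)$ the foliation $\mathcal F$ is given by a quasi-homogeneous $1$-form $\eta_{p_j}$ of type $[\gamma:\gamma:1]$ with $i_S\eta_{p_j}=0$, where $S=\gamma x_0\partial_{x_0}+\gamma x_1\partial_{x_1}+x_2\partial_{x_2}$. Performing the weighted blow-up $\pi_w:M_{[\gamma,\gamma,1]}\to\mathbb P^3$ at the points of $I(f)$, the strict transform $\pi_w^{\ast}\mathcal F$ restricts to a well-defined foliation $\hat{\mathcal G}$ on the exceptional divisor $E_1\simeq\mathbb P^2_{[\gamma,\gamma,1]}$ precisely because of the $S$-invariance. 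Pushing $\hat{\mathcal G}$ forward by the orbifold cover $f_w:\mathbb P^2_{[\gamma,\gamma,1]}\to\mathbb P^2$, $(x_0,x_1,x_2)\mapsto(x_0,x_1,x_2^\gamma)$, produces a singular foliation $\mathcal G$ on $\mathbb P^2$. The line $(x_2=0)\subset\mathbb P^2_{[\gamma,\gamma,1]}$ is $\hat{\mathcal G}$-invariant (it is the branch divisor of $f_w$), so its image is a $\mathcal G$-invariant line in $\mathbb P^2$; the degree of $\mathcal G$ is determined by the degree of $\mathcal F$ together with $\nu$ and $\gamma$ through the formula of Proposition~\ref{graupargenerico}, yielding $d\geq 2$. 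Define the candidate pull-back $\tilde{\mathcal F}:=f^{\ast}\mathcal G\in PB(\Gamma-1,\nu,1,1,\gamma)$.

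The core step is then to prove $\mathcal F=\tilde{\mathcal F}$. For this I invoke property $\mathcal P_2$: fix a Kupka fibre $V(q)=\overline{f^{-1}(q)}\setminus I(f)$ not contained in $(F_2=0)$. By $\mathcal P_3$, the transversal type along $V(q)$ is a hyperbolic non-degenerate singularity with a non-algebraic local separatrix $\Sigma$. This separatrix, saturated along $V(q)$, is a local $\mathcal F$-invariant hypersurface $X$; by construction of $\mathcal G$ via restriction to the exceptional divisor, the pull-back $\tilde{\mathcal F}=f^{\ast}\mathcal G$ also has a local invariant hypersurface $\hat X$ through $V(q)$ corresponding to the same separatrix. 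Exactly as in Lemma~\ref{lemafund}, one compares $X$ with $\hat X$ via the fibration $\hat f:M_{[\gamma,\gamma,1]}\to\mathbb P^2_{[\gamma,\gamma,1]}$: both are deformations of a common $X_0$ inside the fibre over $q$, so for each small $\epsilon$ the vertical slices $\Sigma_\epsilon=X\cap\hat H_\epsilon$ are compact curves mapped by $\hat f$ into arbitrarily small neighbourhoods of $q$; the maximum principle forces $\hat f(\Sigma_\epsilon)$ to be a point, hence $X=\hat f^{-1}(C)=\hat X$. Thus $\mathcal F$ and $\tilde{\mathcal F}$ share a non-algebraic invariant hypersurface.

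Finally, the tangency locus $D=\operatorname{Tang}(\mathcal F,\tilde{\mathcal F})=\{\Omega\wedge\tilde\Omega=0\}$ is an algebraic subset of $\mathbb P^3$. Since $D$ contains the immersed non-algebraic surface $\pi_w(X\setminus\hat V_q)$, we must have $D=\mathbb P^3$, i.e.\ $\mathcal F=\tilde{\mathcal F}=f^{\ast}\mathcal G$. For $n\geq 4$, the local product structure in $\mathcal P_1$ together with the fact that $I(f)$ is smooth, connected and of codimension $3$ lets us restrict to a generic $3$-plane $\Upsilon$ cutting all strata transversely and apply the $n=3$ result to obtain $\mathcal F|_\Upsilon=\tilde{\mathcal F}|_\Upsilon$; varying $\Upsilon$ gives equality globally. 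The main technical obstacle I anticipate is the construction step: verifying that the push-forward $(f_w)_*\hat{\mathcal G}$ lands in $Il_1(d,2)$ with the correct degree and exactly one invariant line, which forces one to track carefully how the quasi-homogeneous structure at points of $I(f)$ interacts with the branching of $f_w$ along $(x_2=0)$.
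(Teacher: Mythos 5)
Your plan follows exactly the route the paper intends: the paper gives no written proof of this theorem, only the remark that Lemma \ref{lemafund} yields it, and your outline faithfully reproduces the relevant argument from the proof of Theorem B (restrict the weighted blow-up of $\mathcal F$ to an exceptional divisor to manufacture $\hat{\mathcal G}$, push forward by $f_w$ to get $\mathcal G\in Il_1(d,2)$, compare $\mathcal F$ with $f^{*}\mathcal G$ along the Kupka fibre supplied by $\mathcal P_2$--$\mathcal P_3$, and conclude via the algebraicity of the tangency locus versus the non-algebraic separatrix). The one point to adjust is that in this static setting there is no deformation parameter, so the compactness of the slices $\Sigma_\epsilon=X\cap\hat H_\epsilon$ cannot be justified by calling $X$ and $\hat X$ ``deformations of a common $X_0$''; it follows instead from the properness of $\hat f$ on a neighbourhood $\hat f^{-1}(U')$ of the compact fibre $\hat V_q$, after which the maximum-principle step goes through verbatim.
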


Let us denote by $ \mathbb{F}ol'_{2}[d',2,(\gamma,\gamma,1)]$ the set of
 $\{\hat{\mathcal G}\}$  saturated foliations of degree $d'$ on $\mathbb P^2_{[\gamma,\gamma,1]}$ with one invariant line.  According to this notation the previous Thereom can be re-written as:

\begin{main}\label{teoc}
In the conditions above, if properties $\mathcal{P}_1$, $\mathcal{P}_2$ and $\mathcal{P}_3$ hold then $\mathcal F$ is a pull back foliation, $\mathcal {F}= \overline{f}^{*}(\hat{\mathcal{G}})$, where $\hat{\mathcal{G}} \in \mathbb{F}ol'_{2}[d',2,(\gamma,\gamma,1)]$
\end{main}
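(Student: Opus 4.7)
The plan is to recycle the machinery developed for Theorem A, replacing the deformation $(\mathcal F_t)$ with the single given foliation $\mathcal F$. Since we no longer carry a parameter, most of the analysis of the moving indeterminacy locus and the isotopy of fibers is unnecessary: I only need to manufacture a candidate base foliation $\hat{\mathcal G}$ from the local quasi-homogeneous data at $I(f)$ and then match $\mathcal F$ with $\overline{f}^{\ast}(\hat{\mathcal G})$ via the Lemma \ref{lemafund} argument.

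\textbf{Step 1 (constructing $\hat{\mathcal G}$).} I would perform the weighted blow-up $\pi_{w}:M_{[\gamma,\gamma,1]}\to\mathbb P^n$ along $I(f)$. By property $\mathcal P_1$, around each $p_{j}\in I(f)$ the foliation $\mathcal F$ is represented by a quasi-homogeneous $1$-form $\eta_{p_{j}}$ of type $[\gamma:\gamma:1]$ satisfying $i_{S}\eta_{p_{j}}=0$ where $S=\gamma x_{0}\partial_{x_{0}}+\gamma x_{1}\partial_{x_{1}}+x_{2}\partial_{x_{2}}$. Hence $\eta_{p_{j}}$ descends to a saturated foliation on the exceptional divisor $E_{j}\simeq\mathbb P^2_{[\gamma,\gamma,1]}$, and I take $\hat{\mathcal G}$ to be this restriction for $j=1$. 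The Kupka hypersurface $V(q)\setminus I(f)$ provided by $\mathcal P_{2}$ transports this restriction between different $E_{j}$'s when $n=3$; for $n\geq 4$ the local product structure in $\mathcal P_{1}$ together with connectedness of $I(f)$ gives the consistency for free. The singular line $(x_{2}=0)$ of $\mathbb P^2_{[\gamma,\gamma,1]}$ is invariant because $\eta_{p_{j}}$ has the prescribed quasi-homogeneous weights with the distinguished $x_{2}$-axis, so $\hat{\mathcal G}\in\mathbb{F}ol'_{2}[d',2,(\gamma,\gamma,1)]$ of the required degree $d'=\gamma(d+1)+1$.

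\textbf{Step 2 (matching).} I then consider $\tilde{\mathcal F}:=\overline{f}^{\ast}(\hat{\mathcal G})$, where $\overline{f}:\mathbb P^n\DashedArrow[->,densely dashed]\mathbb P^2_{[\gamma,\gamma,1]}$ factors $f$ through the weighted projective plane as in Section \ref{part1}, and the induced orbifold extension $\hat{f}:M_{[\gamma,\gamma,1]}\to E_{1}$. By construction $\pi_{w}^{\ast}\mathcal F$ and $\pi_{w}^{\ast}\tilde{\mathcal F}$ restrict to the same $\hat{\mathcal G}$ on $E_{1}$. Property $\mathcal P_{3}$ applied at the singularity $q\in\mathbb P^2$ yields two local smooth invariant hypersurfaces (one for $\mathcal F$, one for $\tilde{\mathcal F}$) along $V(q)$ meeting $E_{1}$ along the same separatrix $(x=0)$ of $\hat{\mathcal G}$ through the corresponding singularity. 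The verbatim argument of Lemma \ref{lemafund}, with the maximum principle applied to $\hat{f}(\Sigma_{\epsilon})$, forces these two hypersurfaces to coincide, giving a common leaf of $\mathcal F$ and $\tilde{\mathcal F}$ whose projection to $\mathbb P^n$ is not algebraic (by the non-algebraic separatrix hypothesis in $\mathcal P_{3}$).

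\textbf{Step 3 (algebraicity of tangencies).} The tangency set $D=\{Z\in\mathbb C^{n+1}\mid\Omega\wedge\tilde\Omega=0\}$ between homogeneous expressions of $\mathcal F$ and $\tilde{\mathcal F}$ is algebraic and contains the non-algebraic leaf from Step 2, so $D=\mathbb P^n$ and $\mathcal F=\overline{f}^{\ast}(\hat{\mathcal G})$. For $n\geq 4$ I would slice by a generic $3$-plane $\Upsilon$ transverse to every stratum of $\mathrm{Sing}(\mathcal F)$, apply the $n=3$ case to $(\mathcal F|_{\Upsilon},\tilde{\mathcal F}|_{\Upsilon})$, and conclude by varying $\Upsilon$.

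\emph{Main obstacle.} The subtle point is Step 1: producing a single global foliation $\hat{\mathcal G}$ on $\mathbb P^2_{[\gamma,\gamma,1]}$ purely from local data at the points $p_{j}\in I(f)$. In Theorem A this came for free because $\hat{\mathcal G}_{t}$ was a genuine deformation of the known foliation $\hat{\mathcal G}_{0}$; here the gluing of the local quasi-homogeneous models $\eta_{p_{j}}$ into one object on $\mathbb P^2_{[\gamma,\gamma,1]}$ of the prescribed degree and with the singular line invariant must be checked directly, using the Kupka transport along the fibers $V_{\tau}$ provided by $\mathcal P_{2}$ and the normality of the weighted blow-up to extend across its codimension-two singular set via Riemann's extension theorem.
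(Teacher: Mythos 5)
Your proposal is correct and is essentially the proof the paper intends: the paper gives no written argument beyond the remark that Lemma \ref{lemafund} yields the result, and your three steps (restricting $\pi_{w}^{*}\mathcal F$ to the exceptional divisor of the weighted blow-up to manufacture $\hat{\mathcal G}$, producing a common non-algebraic leaf along the Kupka fiber $V(q)$ via the maximum-principle argument of Lemma \ref{lemafund} using $\mathcal{P}_2$ and $\mathcal{P}_3$, and concluding from algebraicity of the tangency locus, with the generic $3$-plane slicing for $n\geq 4$) reproduce exactly the argument that closes the proof of Theorem A. Note only that the gluing obstacle you flag in Step 1 does not really arise, since the construction and the matching argument are carried out on the single exceptional divisor $E_1$, where the restriction of the strict transform is already a globally defined saturated foliation.
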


\bibliographystyle{amsalpha}

\begin{thebibliography}{10}


\bibitem[Bea]{bea} 
A.Beauville. \newblock {``Complex Algebraic Surfaces''};
\newblock {London Mathematical Society - Lecture Notes Series 68}, Cambridge University Press (1983). 	


\bibitem[B.M.O-G]{amog}
E. Artal Bartolo; J. Mart'n Morales, J. Ortigas-Galindo.
\newblock {``Intersection theory on abelian-quotient V-surfaces and Q-resolutions'';}
\newblock {Journal of Singularities} 8 (2014) 11-30.         


\bibitem[Bru]{br} 
M. Brunella.
\newblock {``Birational geometry of foliations'';}
\newblock Monografias de Matem\'atica, Instituto de Matem\'atica Pura e Aplicada (IMPA), Rio de Janeiro (2000).

\bibitem[C.CA.G.LN]{ccgl}
O. Calvo-Andrade, D. Cerveau, L. Giraldo, A. Lins Neto.   
\newblock {``Irreducible components of the space of foliations associated to the affine Lie algebra'';}
\newblock { Ergodic Theory Dynam. Systems }  24, no. 4 (2004) 987-1014.

\bibitem[Ca.LN]{caln}
C. Camacho, A. Lins Neto.
\newblock {``The topology of integrable differential forms near a singularity'';}
\newblock { Inst. Hautes \'Etudes Sci. Publ. Math.} 55 (1982) 5-35.

\bibitem[C.LN0]{cln0}
D. Cerveau, A. Lins-Neto.
\newblock {``Holomorphic foliations in $\mathbb P^2$ having an invariant algebraic curve'';}
\newblock { Ann. Inst. Fourrier}  41, no.4 (1991) 883-903.

\bibitem[C.LN1]{cln}
D. Cerveau, A. Lins-Neto.
\newblock {``Irreducible components of the space of holomorphic foliations of degree two in $\mathbb P^n$'';}
\newblock { Annals of Mathematics} 143, no.2 (1996) 577-612.



\bibitem[C.LN.E]{clne}
D. Cerveau, A. Lins-Neto, S.J. Edixhoven.
\newblock {``Pull-back components of the space of holomorphic foliations on  $\mathbb C\mathbb P^n$, $n \ge 3$'';}
\newblock { J. Algebraic Geom.} 10, no. 4 (2001) 695-711.

\bibitem[CS]{cs}
W. Costa e Silva,
\newblock {``Branched pull-back components of the space of holomorphic foliations on  $\mathbb C\mathbb P^n$;}
\newblock { Submitted}


\bibitem[LN1]{ln0}
A. Lins Neto, 
\newblock {``Finite determinacy of germs of integrable  1-forms in dimension 3 (a special case)'';}
\newblock {Geometric Dynamics, Springer Lect. Notes in Math. }  1007 no. 6 (1981) 480-497.


\bibitem[Fa.Pe]{Fa-Pe}
C. Favre, J.V. Pereira.
\newblock {``Foliations invariant by rational maps'';}
\newblock {Math. Z.} 268, no. 3-4 (2011) 753-770.


\bibitem[G.LN]{gln}
X. Gomez-Mont, A. Lins Neto.
\newblock {``Structural stability of singular holomorphic foliations having a meromorphic first integral'';}
\newblock {Topology,} 30, no. 3 (1991) 315-334.


\bibitem[Hi]{hir}
Morris W. Hirsch.
\newblock { ``Differential Topology'';}
\newblock {Springer-Verlag}, New York, (1976).
\newblock Graduate Texts in Mathematics, No. 33.

\bibitem[K]{kupka}
I. Kupka.
\newblock {``The singularities of integrable structurally stable Pfaffian forms'';} 
\newblock {Proc. Nat. Acad. Sci. U.S.A,} 52 no.6 (1964) 1431-1432.

\bibitem[LN0]{ln}
A. Lins Neto;
\newblock {``Componentes Irredut\'iveis dos Espa\c cos de Folhea\c c\~oes'';} 
\newblock  26 Col\'oquio Brasileiro de Matem\'atica, Instituto de Matem\'atica Pura e Aplicada (IMPA), Rio de Janeiro, (2007).


\bibitem[LN1]{ln0}
A. Lins Neto, 
\newblock {``Finite determinacy of germs of integrable  1-forms in dimension 3 (a special case)'';}
\newblock {Geometric Dynamics, Springer Lect. Notes in Math. }  1007 no. 6 (1981) 480-497.


\bibitem[LN2]{ln1}
A. Lins Neto. 
\newblock {``Algebraic solutions of polynomial differential equations and foliations in dimension two'';}
\newblock {Holomorphic Dynam., Springer Lect. Notes in Math. }  1345 (1988)  192-232.

\bibitem[LN.Sc]{lnsc}
A. Lins Neto, B.A. Sc\'ardua.
\newblock {``Folhea\c c\~oes Alg\'ebricas Complexas'';}
\newblock {21\textordmasculine Col\'oquio Brasileiro de Matem\'atica}, IMPA (1997).


\bibitem[LN.S.Sc]{lnssc}
A. Lins Neto, P. Sad, B.A. Sc\'ardua.
\newblock {``On topological Rigidity of projective foliations'';}
\newblock{\em Bull. Soc. math. France.} no.126, (1998) 381-406.


\bibitem[MM]{mamor} 
 J. Mart\'in-Morales. 
 \newblock{``Monodromy Zeta Function Formula for Embedded Q-Resolutions'';}
\newblock{\em Rev. Mat. Iberoam.} 29, no.3 (2013) 939-967.


\bibitem[EM]{mann}
E. Mann.
\newblock{``Orbifold quantum cohomology of weighted projective spaces'';}
 \newblock{J. Algebraic Geom.} 17, no.1 (2008) 137-166. 


\bibitem[EM]{mann1}
E. Mann.
\newblock{``Cohomologie quantique orbifold des espaces projectifs \`a poids'';}
 \newblock{arXiv:math.AG/0510331} 

\bibitem[M]{lz}
R. Monge. 
\newblock{`` Weighted Projective Planes and Irreducible Components of the Space of Foliations
'';}
 \newblock{Tesis IMPA 2014 - http://preprint.impa.br/visualizar?id=5943} 
 
\bibitem[Ser]{Ser0}
E.Sernesi.
\newblock {``Deformations of Algebraic Schemes'';} 
\newblock {Grundlehren der Mathematischen Wissenschaften [Fundamental Principles of Mathematical Sciences}, 334. Springer-Verlag, 2006.

\end{thebibliography}

\end{document}